\newcommand{\bb} {\mathbb}
\newcommand{\frk}{\mathfrak}
\newcommand{\bbC}{\bb{C}}
\newcommand{\bbF}{\bb{F}}
\newcommand{\bbQ}{\bb{Q}}
\newcommand{\bbR}{\bb{R}}
\newcommand{\bbZ}{\bb{Z}}
\newcommand{\calF}{\mathcal{F}}
\newcommand{\calP}{\mathcal{P}}
\newcommand{\fgl}{\frk{gl}}
\newcommand{\fsl}{\frk{sl}}
\newcommand{\ff}{\frk{f}}
\newcommand{\fs}{\frk{s}}
\newcommand{\wh}{\widehat}
\newcommand{\ep}{\epsilon}
\newcommand{\ve}{\varepsilon}
\newcommand{\NS}{\text{NS}}
\newcommand{\pr}{\operatorname{pr}}
\newcommand{\re}{\operatorname{Re}}
\newcommand{\End}{\operatorname{End}}
\newcommand{\Vir}{\operatorname{Vir}}
\newcommand{\SVir}{\operatorname{SVir}}
\newcommand{\BF}{\operatorname{\mathcal{H}\mathcal{C}}}
\newcommand{\longto}{\longrightarrow}
\newcommand{\longsimto}{\xrightarrow{\ \sim \ }}
\newcommand{\simto}{\xrightarrow{\sim}}
\newcommand{\no}{\genfrac{}{}{0pt}{1}{\circ}{\circ}}
\newcommand{\ket}[1]{\left|#1\right>}
\newcommand{\bra}[1]{\left<#1\right|}
\newcommand{\spr}[3]{\left<#1 , #2\right>_{#3}}
\theoremstyle{definition}
 \newtheorem{thm}{Theorem}[section]
 \newtheorem{lem}[thm]{Lemma}
 \newtheorem{prop}[thm]{Proposition}
 \newtheorem{dfn}[thm]{Definition}
 \newtheorem{eg}[thm]{Example}
 \newtheorem{fct}[thm]{Fact}
 \newtheorem{rmk}[thm]{Remark}
 \newtheorem{cnj}[thm]{Conjecure}
\newtheorem*{thm*}{Theorem}
\newenvironment{ack}{\par {\bf Acknowledgement.} }{\par}
\numberwithin{equation}{section}
\begin{document}


\title{Singular vectors of $N=1$ super Virasoro algebra via Uglov symmetric functions}
\author{Shintarou Yanagida}
\address{Research Institute for Mathematical Sciences,
Kyoto University, Kyoto 606-8502, Japan}
\email{yanagida@kurims.kyoto-u.ac.HP}

\date{August 24, 2015}


\begin{abstract}
We give a proof of the conjecture by 
Belavin, Bershtein and Tarnopolsky,
which claims that 
a bosonization of singular vectors of $N=1$ super Virasoro algebra 
can be identified with Uglov (Jack $\fgl_p$) symmetric functions with $p=2$.
\end{abstract}

\maketitle

\section{Introduction}
\label{sect:intro}

\subsection{}

Singular vectors of representations of Lie algebras 
(or quantum groups, $W$ algebras and so on)
are important from some viewpoints.
The first viewpoint is that the existence of a singular vector 
implies the reducibility of the module.
The second one, which is the motivation of this note,
is that in several cases the singular vector can be expressed 
explicitly in terms of special functions.

One of the most surprising examples is the theorem of 
Mimachi and Yamada \cite{MY} 
that singular vectors $\chi_{r,s}$ of Verma modules of the Virasoro Lie algebra 
$\Vir$
can be identified with Jack symmetric functions $P_{(r^s)}(\beta)$,
where the parameter $\beta$ is related to the central charge $c$ of $\Vir$ 
as $c=1-8(\beta-1)^2/\beta$.

Let us also mention the work of Shiraishi, Kubo, Awata and Odake \cite{SKAO} 
that the singular vectors of Verma modules of the deformed Virasoro algebra 
$\Vir_{q,t}$ 
can be identified with Macdonald symmetric functions $P_{(r^s)}(q,t)$,
where the parameters $(q,t)$ of the functions coincide with those of the algebra.
This work can be considered as a natural $q$-analogue of 
the work of Mimachi and Yamada.

Recently Belavin, Bershtein and Tarnopolsky \cite[\S 4]{BBT} established 
a super-analogue of this Virasoro-Jack relation.
They claim that 
singular vectors of Verma module of the $N=1$ super Virasoro algebra 
$\SVir$ are, by the bosonization, 
identified with the Uglov symmetric function 
$P^{(\gamma,2)}_{(r^s)}$.
The purpose of this note is to prove this conjecture.

\subsection{}

Now let us state the claim precisely.
Denote by $\SVir$ the Neveu-Schwartz sector of the $N=1$ super Virasoro algebra 
(see Definition \ref{dfn:SVir} for the detail).
Let $M(c,h)$ be the Verma module of $\SVir$, 
where $c$ is the central charge and $h$ is the highest weight
(see Definition \ref{dfn:Verma}).
It is known that $M(c,h)$ has a singular vector 
(see Definition \ref{dfn:singular})
if and only if $h=h^{\NS}_{r,s}$ for a set of integers $(r,s)$ with 
$r-s \equiv 0 \pmod{2}$
(see Fact \ref{fct:singular} for the definition of $h^{\NS}_{r,s}$
 and the detail).

Let us also denote by $\calF(\alpha)$ the Fock representation of $\SVir$
with highest weight $\alpha$.
There is an intertwiner $M(c,h) \to \calF(\alpha)$
with 
$$
 h=\alpha^2-\rho \alpha,\quad
 c=3/2-12\rho^2.
$$
(see Fact \ref{fct:bosonization}).
Under fixed $\rho$, there are two $\alpha$'s corresponding to $h$.
Let us denote by $\alpha_{r,s}^{\pm}$ the corresponding 
highest weight to $h_{r,s}$.
Let us denote by $\chi_{r,s} \in M(c,h_{r,s})$ the singular vector,
and  by $\chi_{r,s}^+ \in \calF(\alpha_{r,s}^+)$ 
the image under the intertwiner $M(c,h) \to \calF(\alpha)$.
(see Fact \ref{fct:singular} for the explicit definition of $\chi_{r,s}^+$).

For each $t \in \bbC$,
we have an isomorphism $\fs_t:\calF(\alpha_{r,s}) \to \Lambda_{\bbC}$
of vector spaces,
where $\Lambda_{\bbC}$ is the space of symmetric functions 
with coefficients in $\bbC$
(see Lemma \ref{lem:sf}).

\begin{thm*}
Set $t^+ := \rho + \sqrt{\rho^2+1}$.
Then 
$$
 \fs_{t^+}(\chi_{r,s}^+) \propto P^{(1/t_+^2,2)}_{(r^s)}.
$$
where the right hand side is the Uglov symmetric function 
(see Definition \ref{dfn:Uglov})
\end{thm*}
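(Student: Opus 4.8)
The plan is to reduce the theorem to a statement entirely on the level of symmetric functions, where both sides become concrete objects that can be compared via a uniqueness/characterization argument. This is the classical strategy behind the Mimachi--Yamada theorem for the Virasoro case, and the super-analogue should follow the same template once the bosonization $\fs_{t^+}$ is made explicit.

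\medskip

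First I would write down, under the isomorphism $\fs_{t^+}$, the image of the screening/intertwining operator that produces $\chi_{r,s}^+$ inside the Fock space $\calF(\alpha_{r,s}^+)$. In the bosonic realization the singular vector is obtained by a contour integral of a product of screening currents; pushing this through $\fs_{t^+}$ turns it into an explicit operator acting on $\Lambda_\bbC$. The key point is that the $N=1$ super structure, after bosonization, introduces a second boson (the fermionic sector), and it is precisely this doubling that matches the ``$p=2$'' in the Uglov function $P^{(\gamma,2)}_{(r^s)}$: the Uglov function with $p=2$ is designed to interpolate between two copies of Jack data, so I expect the computation of $\fs_{t^+}(\chi_{r,s}^+)$ to land naturally in the $p=2$ Uglov setting once the parameter identification $\gamma = 1/t_+^2$ with $t^+=\rho+\sqrt{\rho^2+1}$ is substituted.

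\medskip

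Next I would establish the two defining properties that characterize the Uglov function $P^{(\gamma,2)}_{(r^s)}$ up to scalar, and verify them for $\fs_{t^+}(\chi_{r,s}^+)$. Concretely, the Uglov function is the unique element (up to normalization) that is (i) triangular with respect to the dominance order on partitions with leading term the monomial indexed by $(r^s)$, and (ii) orthogonal to lower terms with respect to the appropriate Uglov inner product. For the singular-vector side, triangularity should come from the weight grading of $\SVir$ (the singular vector sits at level $rs/2$ and its leading component is forced by the highest-weight condition), while the orthogonality should come from the intertwining property: $\chi_{r,s}^+$ is annihilated by the positive modes, which under $\fs_{t^+}$ translates into orthogonality against the image of the lower modes. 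I would need to check that the inner product induced on $\Lambda_\bbC$ by the Fock-space contravariant form coincides, under $\fs_{t^+}$ and the parameter dictionary, with the $p=2$ Uglov inner product; this matching of pairings is the technical heart.

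\medskip

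The hard part will be controlling the precise form of the screening operator image and verifying that the induced inner product is exactly the Uglov one rather than a Jack or Macdonald relative. In the purely bosonic Virasoro case this matching is a single-variable deformation; here the fermionic half-integer modes of $\SVir$ in the Neveu--Schwartz sector must be absorbed into the second $\fgl_2$-component of the Uglov data, and I expect the main obstacle to be a careful bookkeeping of how the half-integer grading and the square-root parameter $t^+=\rho+\sqrt{\rho^2+1}$ conspire to reproduce the Uglov specialization $\gamma=1/t_+^2$. Once triangularity, the inner-product identification, and the orthogonality characterization are all in place, the proportionality $\fs_{t^+}(\chi_{r,s}^+)\propto P^{(1/t_+^2,2)}_{(r^s)}$ follows from the uniqueness of the Uglov function.
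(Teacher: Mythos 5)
Your overall skeleton (reduce the theorem to a uniqueness characterization of $P^{(\gamma,2)}_{(r^s)}$, then verify the two defining properties for $\fs_{t^+}(\chi^+_{r,s})$) is the same as the paper's, and your first property --- unitriangularity in the monomial basis, which indeed has to be extracted from the explicit screening integral \eqref{eq:chi_rs} rather than from the weight grading alone --- coincides with the paper's. The gap is in your second property. You characterize the Uglov function by orthogonality with respect to the Uglov inner product, and you propose to obtain this orthogonality from the singular-vector condition via the claim that the Fock-space contravariant form goes over to the Uglov form under $\fs_{t^+}$. That claim fails for the only form you could use directly. The Heisenberg--Clifford contravariant form ($a_n^\dagger=a_{-n}$, $b_k^\dagger=b_{-k}$) transported through $\fs_t$ gives on even power sums
$\langle p_{2n},p_{2n}\rangle_{\mathrm{ind}}=4t^2n$
(because $a_{-n}\mapsto -\tfrac{1}{2t}p_{2n}$ and $\langle a_{-n}\ket{\alpha},a_{-n}\ket{\alpha}\rangle=n$), whereas the $p=2$, $\gamma=1/t^2$ Uglov form gives $\langle p_{2n},p_{2n}\rangle=2t^2n$; the fermionic sector produces its own mismatch on odd power sums (from Example \ref{eg:chi:Lambda} one reads off $\fs_t(b_{-1/2}\ket{\alpha})=-\tfrac{1}{2t}p_1$, so $\langle p_1,p_1\rangle_{\mathrm{ind}}=4t^2$ versus the Uglov value $1$). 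These parity-dependent factors are not an overall rescaling of each graded piece, and they destroy orthogonality: already in degree $2$, $P^{(1/t^2,2)}_{(2)}\propto p_2+t^2p_1^2$ and $P^{(1/t^2,2)}_{(1,1)}\propto p_1^2-p_2$ are Uglov-orthogonal but are not orthogonal for the induced form at generic $t$.

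Moreover, the Heisenberg--Clifford form is not even the form for which the singular-vector condition yields orthogonality: because of the background charge $\rho$ in \eqref{eq:bosonization}, $\ff(L_n)^\dagger\neq\ff(L_{-n})$ for that form, so $G_k\chi=L_n\chi=0$ ($k,n>0$) gives orthogonality only for the $\SVir$-contravariant pairing, which pairs $\calF(\alpha^+_{r,s})$ with the reflected module $\calF(2\rho-\alpha^+_{r,s})$ rather than with itself. To run your argument you would then have to (i) compute this two-Fock-space pairing through two different identifications with $\Lambda_{\bbC}$ and show it agrees with the Uglov form, and (ii) show that the $\SVir$-descendants at level $rs/2$ map onto a space containing $\mathrm{span}\{m_\mu:\mu<(r^s)\}$; neither is a routine check, and the degree-$2$ computation above shows that the naive versions of (i) and (ii) cannot both hold, so the repair genuinely requires new input. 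The paper avoids inner products altogether: its second characterizing property is that $P^{(\gamma,2)}_\lambda$ is an eigenfunction of the explicit operator $C^1_0(\gamma)$ of \eqref{eq:C10} (a root-of-unity degeneration of the Macdonald operator of \cite{SKAO}), and the singular-vector condition enters only through the fact that $C^1_0(t_+^{-2})$ agrees, modulo the left ideal generated by $G_{1/2},G_{3/2},\dots$, with an operator acting by a scalar (Proposition \ref{prop:C10}). That eigenoperator step, or a proof of the pairing identification in (i), is what your proposal is missing.
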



\subsection{}

Now let us explain the organization of this note briefly.
In \S \ref{sect:SVir} we review the $N=1$ super Virasoro algebra, 
and singular vectors of the Verma module. 
The Uglov symmetric function is also reviewed 
and the conjecture of \cite{BBT} will be explained 
with some examples.

In \S \ref{sect:main} we give a proof of the conjecture.
We will show that the singular vectors are the eigenfunctions
of a certain operator $C^1_0(1/t_+^2)$ (Proposition \ref{prop:C10}).
We will also see that this property and the triangular expansion 
with respect to the dominance ordering 
characterises the Uglov symmetric function uniquely 
(Proposition \ref{prop:Uglov}).

In the appendix we give remarks on the following topics.
The first one given in \S \ref{sect:Selberg} 
is that the existence of singular vectors
is guaranteed by the non-triviality of the Selberg integral \cite{S}.
Note that in the (non-super) Virasoro case, 
it is already stated and proven by \cite{TK}.
The second on in \S \ref{} is that 
our operator $C^1_0(\gamma)$ is a limit of 
a difference-differential operator $C^1_N(\gamma)$ 
acting on the symmetric polynomials of $N$-variables,
where the word `limit' means the limit of $N \to \infty$.

\subsection{Notation}
All the objects are defined over the complex number field $\bbC$ 
unless otherwise stated.

We follow \cite{M} for the notations of partitions and related notions.
%
A partition is a finite sequence of positive integers 
ordered by size.
We include the empty sequence $\emptyset$ as a partition.
For a partition $\lambda = (\lambda_1,\ldots,\lambda_k)$,
its length and total sum are denoted as 
$$
 \ell(\lambda):=k,\quad |\lambda| := \sum_{i=1}^k \lambda_i.
$$
We also set 
$$
 z_\lambda := \prod_{i \in \bbZ_>0} i^{m_\lambda(i)} m_\lambda(i)!,\quad
 m_\lambda(i) := \#\{ j \in \bbZ_{>0} \mid m_j = i \}.
$$
The set of all partitions will be denoted by $\calP$.


We also follow \cite{M} for the notation of symmetric functions.
$\Lambda$ denotes the space of symmetric functions over $\bbZ$.
For a ring $R$, we denote the coefficient extension by 
$\Lambda_R := \Lambda \otimes_{\bbZ} R$.

Some bases of $\Lambda$ will be used repeatedly.
$m_\lambda$ denotes the monomial symmetric function for a partition $\lambda$.
$e_n$ denotes the $n$-th elementary symmetric function
and $e_\lambda := e_{\lambda_1} \cdots e_{\lambda_k}$. 
The sets $\left\{ m_\lambda \mid \lambda \in \calP \right\}$ and 
$\left\{ e_\lambda \mid \lambda \in \calP \right\}$ are bases of $\Lambda$. 
$p_n$ denotes the $n$-th power-sum symmetric function,
and $p_\lambda := p_{\lambda_1} \cdots p_{\lambda_k}$ for a partition $\lambda$.
The set $\left\{ p_\lambda \mid \lambda \in \calP \right\}$ is a basis of 
$\Lambda_{\bbQ}$.

For two (commuting) indeterminates $q$ and $t$, 
the monic Macdonald symmetric function 
is denoted by $P_\lambda(q,t)$.
It is a member of $\Lambda_{\bbF}$ with $\bbF := \bbQ(q,t)$.
The set $\left\{P_\lambda(q,t) \mid \lambda \in \calP \right\}$ 
is a basis of $\Lambda_{\bbF}$,
characterized by the following properties.
\begin{itemize}
\item 
It has the expansion 
$P_\lambda(q,t) = m_{\lambda} + \sum_{\mu < \lambda} c_{\lambda,\mu} m_\mu$
with respect to the dominance ordering $<$ among partitions. 

\item
$$
 \spr{P_\lambda(q,t)}{P_\mu(q,t)}{q,t} \propto \delta_{\lambda,\mu},
$$
where $\spr{-}{-}{q,t}$ is the inner product on $\Lambda_{\bbF}$ 
defined by
$$
 \spr{p_\lambda}{p_\mu}{q,t} := 
  \delta_{\lambda,\mu} z_\lambda
  \prod_{i=1}^{\ell(\lambda)}\dfrac{1-q^{|\lambda_i|}}{1-t^{|\lambda_i|}}
$$ 
\end{itemize}


\begin{ack}
The author is supported by the Grant-in-aid for 
Scientific Research (No.\ 25800014), JSPS.
This work is also partially supported by the 
JSPS for Advancing Strategic International Networks to 
Accelerate the Circulation of Talented Researchers
``Mathematical Science of Symmetry, Topology and Moduli, 
  Evolution of International Research Network based on OCAMI"''.

The author would like to thank R.~Kodera and H.~Awata
for helpful discussions.
He would like to thank K.~Wada for 
the invitation to Shinshu University in July 2015, 
where a seminar talk is given on the content of this note.

This note is finished during the author's stay at UC Davis. 
The author would like to thank the institute for support and hospitality.
\end{ack}

\section{Singular vectors of $N=1$ super Virasoro algebra}
\label{sect:SVir}

Our notation basically follows \cite[\S III]{KM}.

\subsection{$N=1$ super Virasoro algebra}

In this paper we only consider the Neveu-Schwarz sector.
\begin{dfn}\label{dfn:SVir}
We denote by 
$\SVir = \SVir_{N=1}^{\NS}$
the Neveu-Schwarz sector of the $N=1$ super Virasoro algebra.
It is the Lie superalgebra 
generated by $L_n$ ($n \in \bbZ$), $G_k$ ($k \in \bbZ + 1/2$) 
and central $C$ with 
\begin{align*}
&[L_m,L_n] = (m-n)L_{m+n}+\dfrac{1}{12}\left(m^3-m\right)\delta_{m+n,0} \, C,\\ 
&[L_n,G_k] = \left(\dfrac{n}{2}-k\right)G_{n+k},\\ 
&[G_k,G_l]_+ = 2L_{k+l} + \dfrac{1}{3}\left(k^2-\dfrac{1}{4}\right)C.
\end{align*}
\end{dfn}

\begin{rmk}\label{rmk:SVir:current}
\begin{enumerate}
\item 
$G_k$ ($k \in \bbZ + 1/2$) for Neveu-Schwarz algebra, 
$G_k$ ($k \in \bbZ$) Ramond algebra.

\item
We shall use the following current form.
\begin{align*}
T(z) := \sum_{n \in \bbZ}L_n z^{-n-2},\quad
G(z) := \sum_{k \in \bbZ+1/2}G_k z^{-k-3/2}.
\end{align*}
Then the defining relation is rewritten as the following OPE.
\begin{align*}
&T(z) T(w) \sim 
 \dfrac{C/2}{(z-w)^4}+\dfrac{2}{(z-w)^2}T(w)+\dfrac{1}{z-w}(\partial T)(w),
\\
&T(z) G(w) \sim 
 \dfrac{3/2}{(z-w)^2}G(w)+\dfrac{1}{z-w}(\partial G)(w),
\\
&G(z) G(w) \sim 
 \dfrac{2C/3}{(z-w)^3}+\dfrac{2}{z-w}T(w).
\end{align*}
Here $\sim$ is the equality modulo regular parts 
and $(\partial T)(z) := \partial_z T(z)$, 
$(\partial G)(z) := \partial_z G(z)$.

\end{enumerate}
\end{rmk}

\begin{dfn}\label{dfn:Verma}
\begin{enumerate}
\item 
For $c,h \in \bbC$,
the Verma module  $M(c,h)$ 
is the $\SVir$-module generated by 
the highest weight vector $\ket{c,h}$ with 
$$
 L_n \ket{c,h} = 0, \ (n>0) \quad
 L_0 \ket{c,h} = h \ket{c,h},\quad
 G_k \ket{c,h} = 0, \ (k>0) \quad
 C   \ket{c,h} = c\ket{c,h}.
$$

\item
For $c,h \in \bbC$,
the dual Verma module  $M^*(c,h)$ 
is the right $\SVir$-module generated by 
the highest weight vector $\bra{c,h}$ with 
$$
 \bra{c,h} L_{-n} = 0 \ (n>0), \quad
 \bra{c,h} L_0    = h \bra{c,h},\quad
 \bra{c,h} G_{-k} = 0 \ (k>0), \quad
 \bra{c,h} C      = c \bra{c,h}.
$$

\item
The contravariant form  
$\cdot: M^*(c,h) \times M(c,h) \to \bbC$ is the bilinear form 
uniquely characterized by the properties
$$
 \bra{c,h} \cdot \ket{c,h} = 1,\quad
 w x \cdot v = w \cdot x v \ 
 \left(\forall \, v \in M(c,h),\ w \in M^*(c,h),\ 
  x \in U(\SVir)\right).
$$
Hereafter we omit the symbol $\cdot$  
since it doesn't make confusion by the second property.
\end{enumerate}
\end{dfn}

\begin{rmk}
\begin{enumerate}
\item 
$M(c,h)$ has a natural PBW-type basis 
$$
 \left\{L_{-\lambda^a} G_{-\lambda^b} \ket{c,h} 
   \, \big| \,  
   \text{$\lambda = (\lambda^a,\lambda^b)$: super-partitions}
 \right\}.
$$
Here a super-partition $\lambda = (\lambda^a,\lambda^b)$ 
consists of the usual partition
$$
 \lambda^a = (\lambda^a_1 \ge \lambda^a_2 \ge \cdots \ge \lambda^a_l) 
 \subset \bbZ_{\ge 1}
$$ 
and the half-integer valued strict partition
$$
 \lambda^b = (\lambda^b_1 > \lambda^b_2 > \cdots > \lambda^b_m) 
 \subset \dfrac{1}{2}+\bbZ_{\ge 0}.
$$
For $\lambda = (\lambda^a,\lambda^b)$  
we used the abbreviation
$$
 L_{-\lambda^a} G_{-\lambda^b} := 
 L_{-\lambda^a_l} \cdots L_{-\lambda^a_2} \, L_{-\lambda^a_1} \, 
 G_{-\lambda^b_m} \cdots G_{-\lambda^b_2} \, G_{-\lambda^b_1}.
$$
Hereafter we will denote by $\calP^{\NS}$ the set of super-partitions.

\item
By the defining definition, 
$M(c,h)$ is $L_0$-graded as
$$
 M(c,h) = \bigoplus_{2n \in \bbZ_{\ge0}} M(c,h)_n,\quad
 M(c,h)_n := \{ v \in M(c,h) \mid L_0 v = (n+h)v \}.
$$
Each graded component $M(c,h)_n$ has the PBS-basis
$$
 \left\{L_{-\lambda^a} G_{-\lambda^b} \ket{c,h} 
   \, \big| \,  |\lambda| = n
 \right\},
$$
where for a super-partition $\lambda = (\lambda^a,\lambda^b)$ 
we defined
$$
 |\lambda| := |\lambda^a| + |\lambda^b| 
            = \sum_{i=1}^l \lambda^a_i + \sum_{j=1}^m \lambda^b_j.
$$
Thus the dimension of the graded component 
is given by 
$$
 \dim M(c,h)_n  = p_{\NS}(n) := \#\{\lambda \in \calP^{\NS} \mid |\lambda| =n\}
$$
and its generating function is given by 
$$
 \sum_{2n \in \bbZ_{\ge0}} x^n p_{\NS}(n) 
 = \dfrac{\prod_{k \in \bbZ_{\ge0}+1/2}(1+x^k)}{\prod_{m \in \bbZ_{>0}}(1-x^m)}.
$$

\item
Similar statements hold for $M^*(c,h)$.
The PBW-type basis is written as 
$$
 \left\{L_{-\lambda^a} G_{-\lambda^b} \ket{c,h} 
   \, \big| \,  
   \lambda = (\lambda^a,\lambda^b) \in \calP^{\NS}
 \right\}.
$$
with the abbreviation
$$
 G_{\lambda^b}  L_{\lambda^a} := 
 G_{\lambda^b_1} \, G_{\lambda^b_2} \cdots G_{\lambda^b_m}
 L_{\lambda^a_1} \, L_{\lambda^a_2} \cdots L_{\lambda^a_l}.
$$

\item
For $w \in M^*(c,h)_m$ and $v \in M(c,h)_n$, we have 
$$
 w \cdot v = 0 \ \text{ if } \ m \neq n.
$$
\end{enumerate}
\end{rmk}

The irreducibility of the Verma module $M(c,h)$ is 
encoded in the determinant of the contravariant form.
As in the (non-super) Virasoro algebra,
the determinant has a factorized formula, 
which was conjectured by Kac \cite{K}.

\begin{fct}[{conjectured in \cite{K}}]
\label{fct:Kac}
For $n$ satisfying $2n \in \bbZ_{\ge 0}$,
set the $p_{\NS}(n) \times p_{\NS}(n)$ matrix $K_n$ by 
$$
 K^{\NS}_n := 
 \left(\bra{c,h} G_{\lambda^b}  L_{\lambda^a}
  L_{-\mu^a} G_{-\mu^b} \ket{c,h}\right)_{|\lambda|=|\mu|= n}.
$$
Then its determinant has the form 
\begin{align}
\label{eq:Kac}
\det\left(K^{\NS}_n\right) \propto 
 \prod_{\substack{r,s \in \bbZ_{>0},\, r s \le 2 n, \\ r \equiv s \pmod{2}}}
 \left(h-h_{r,s}^{\NS}\right)^{p_{\NS}(n-rs/2)}
\end{align}
where the zeros of the determinant are
\begin{align}\label{eq:hrsNS}
 h_{r,s}^{\NS} := 
 \dfrac{1}{8}\left(r t_+ + s t_-\right)^2 - \dfrac{\rho^2}{2}, \quad
\end{align}
with $\rho$ and $t_{\pm}$ defined by 
\begin{align}
\label{eq:tpm}
 c= \dfrac{3}{2}-12\rho^2,\quad
 t_{\pm} := \rho \pm \sqrt{\rho^2+1}. 
\end{align}
We obviously have $t_- =-  t_+^{-1}$.
\end{fct}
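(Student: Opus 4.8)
The plan is to follow the classical Feigin--Fuks / Kac--Kazhdan strategy: regard $D_n:=\det\bigl(K^{\NS}_n\bigr)$ as an element of $\bbC[c,h]$---each matrix entry is a polynomial in $c$ and $h$ obtained by normal-ordering with the relations of Definition \ref{dfn:SVir}---and pin it down by matching its degree in $h$ against the proposed product of linear factors. I would fix a generic value of $c$, prove the identity as polynomials in $h$ there, and extend to all $c$ at the end by polynomiality.

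First I would compute $\deg_h D_n$ together with its leading coefficient. The diagonal entry indexed by a super-partition $\lambda=(\lambda^a,\lambda^b)$ is a product of elementary one-box norms: each $L_{-m}$ contributes $\bra{c,h}L_m L_{-m}\ket{c,h}=2mh+\tfrac{c}{12}(m^3-m)$ and each $G_{-k}$ contributes $2h+\tfrac{c}{3}(k^2-\tfrac14)$, both linear in $h$ with nonzero leading coefficient. Thus the diagonal entry has $h$-degree $\ell(\lambda^a)+\ell(\lambda^b)$, and a standard triangularity estimate---ordering super-partitions so that every off-diagonal entry of $K^{\NS}_n$ carries strictly smaller $h$-degree than the diagonal ones---shows that the top $h$-power of $D_n$ is the product of the diagonal entries. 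Hence
$$
\deg_h D_n=\sum_{|\lambda|=n}\bigl(\ell(\lambda^a)+\ell(\lambda^b)\bigr)
$$
with a nonzero constant leading coefficient.

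Next I would produce the roots. For $(r,s)$ with $r,s>0$, $r\equiv s\pmod2$ and $rs\le 2n$, the Verma module $M(c,h^{\NS}_{r,s})$ contains a singular vector $v_{r,s}$ at level $rs/2$; I take its existence as the input supplied by the free-field (Selberg integral) construction of the appendix. Writing $\langle\cdot,\cdot\rangle$ for the contravariant form of Definition \ref{dfn:Verma} and using that $v_{r,s}$ is annihilated by every positive mode, a short computation gives $\langle u,v_{r,s}\rangle=0$ for all $u\in M(c,h)_{rs/2}$; then the contravariant property $\langle u,X v_{r,s}\rangle=\langle X^\dagger u,v_{r,s}\rangle$ with $X^\dagger u\in M(c,h)_{rs/2}$ shows that every level-$n$ descendant $X v_{r,s}$ lies in the radical of $K^{\NS}_n$ at $h=h^{\NS}_{r,s}$. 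At generic $c$ these descendants are linearly independent and number $p_{\NS}(n-rs/2)$, so $\dim\ker K^{\NS}_n(h^{\NS}_{r,s})\ge p_{\NS}(n-rs/2)$. Because the order of vanishing of a determinant at a point is always at least the kernel dimension there, the factor $(h-h^{\NS}_{r,s})^{p_{\NS}(n-rs/2)}$ divides $D_n$, and at generic $c$---where the $h^{\NS}_{r,s}$ are distinct---these factors can be collected independently.

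The final step is the degree count that upgrades divisibility to equality. Summing the bounds gives $\sum_{r\equiv s,\ rs\le 2n}p_{\NS}(n-rs/2)\le\deg_h D_n$, so the whole formula rests on the combinatorial identity
$$
\sum_{|\lambda|=n}\bigl(\ell(\lambda^a)+\ell(\lambda^b)\bigr)
=\sum_{\substack{r,s>0,\ rs\le 2n\\ r\equiv s\pmod 2}} p_{\NS}(n-rs/2),
$$
which I would verify by a generating-function computation: expand the left side through the product formula for $\sum_{\lambda}x^{|\lambda|}$, and recognize the right side as $\bigl(\sum_{r\equiv s}x^{rs/2}\bigr)\cdot\sum_n p_{\NS}(n)x^n$. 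Once the degrees agree, the divisibility of the previous step accounts for all roots with exactly the stated multiplicities, so $D_n\propto\prod_{r,s}(h-h^{\NS}_{r,s})^{p_{\NS}(n-rs/2)}$ for generic $c$; this extends to all $c$ since both sides are polynomial in $c$---the factors for $(r,s)$ and $(s,r)$ combine to remove $\sqrt{\rho^2+1}$ and the $r=s$ terms are already rational in $\rho^2$. The hard part will be the two inputs that make the bookkeeping exact rather than a mere inequality: the combinatorial identity above, and---more essentially---the guaranteed nonvanishing of the singular vectors $v_{r,s}$, which is precisely where the non-triviality of the Selberg integral does the real work.
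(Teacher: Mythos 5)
Your proposal is correct and follows essentially the same route as the paper: the paper also obtains the roots $h^{\NS}_{r,s}$ from the singular vectors $\chi^{\ep}_{r,s}$ constructed by screening operators (Fact \ref{fct:singular}), with their non-vanishing guaranteed by the Selberg integral discussed in Appendix \ref{sect:Selberg}, and then establishes \eqref{eq:Kac} ``by counting singular vectors at each level.'' The Feigin--Fuchs-style bookkeeping you spell out (leading $h$-degree from the diagonal, radical dimension bounds at generic $c$, the combinatorial degree identity) is precisely the detail the paper omits by deferring to \cite{KM} and \cite{FF}.
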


\begin{eg}
For $n \le 3/2$ we have
\begin{align*}
&K^{\NS}_{1/2} = \left(\langle G_{1/2} G_{-1/2} \rangle\right) = (2h),\quad
 K^{\NS}_{1}   = \left(\langle L_{1}   L_{-1}   \rangle\right) = (2h),
\\
&K^{\NS}_{3/2}=
 \begin{pmatrix} 
  \langle G_{1/2} L_{1} L_{-1} G_{-1/2} \rangle & \langle G_{1/2} L_{1} G_{-3/2} \rangle \\
  \langle G_{3/2}       L_{-1} G_{-1/2} \rangle & \langle G_{3/2}       G_{-3/2} \rangle
 \end{pmatrix}
= \begin{pmatrix} 4h^2+4h & 4h \\ 4h & 2h + 2c/3 \end{pmatrix}
\end{align*}
and
\begin{align*}
\det\left(K^{\NS}_1\right) = 2 \left(h-h^{\NS}_{1,1}\right),\quad
\det\left(K^{\NS}_2\right) = 2 \left(h-h^{\NS}_{1,1}\right),\quad
\det\left(K^{\NS}_3\right) = 8 \left(h-h^{\NS}_{1,1}\right)
                               \left(h-h^{\NS}_{1,3}\right)\left(h-h^{\NS}_{3,1}\right).
\end{align*}
\end{eg}

In the next subsection we follow 
the strategy of \cite{KM} proving Fact \ref{fct:Kac}
using the bosonization of the algebra.
It is a super-analogue of \cite{FF},
where the Virasoro algebra case was shown,

Let us also mention that Iohara and Koga \cite{IK1} 
obtained the Bernstein-Gelfand-Gelfand type resolutions of 
the Verma module of $\SVir$,
and as a corollary Fact \ref{fct:Kac} follows.
See also \cite{IK2} for the study of the Fock module.

\subsection{Bosonization}

\begin{dfn}
Heisenberg-Clifford algebra $\BF$ 
is the Lie superalgebra generated by $a_n$ ($n\in\bbZ$) 
and $b_k$ ($k\in\bbZ+1/2$) and $1$ with 
\begin{align*}
[a_m,a_n] = m \delta_{m+n,0},\quad
[b_k,b_l]_+ = \delta_{k+l,0},\quad
[a_n,b_k] = 0. 
\end{align*}
The normal ordering will be denoted by $\no \cdots \no$.
\end{dfn}

\begin{rmk}
As in Remark \ref{rmk:SVir:current},
we can rewrite the defining relation of $\BF$ 
via the currents
$$
 a(z) = \sum_{n \in \bbZ} a_n z^{-n-1},\quad
 b(z) = \sum_{k \in \bbZ+1/2} b_k z^{-k-1/2}
$$
as 
\begin{align}\label{eq:HA:current}
 a(z) a(w) = \dfrac{1}{(z-w)^2} + \no a(z) a(w) \no,\quad
 b(z) b(w) = \dfrac{1}{z-w} + \no b(z) b(w) \no.
\end{align}
\end{rmk}

\begin{dfn}
For $\alpha \in \bbC$,
the Fock module $\calF(\alpha)$ is the $\BF$-module 
generated by the highest weight vector $\ket{\alpha}$ with 
\begin{align}\label{eq:BF:rel}
 a_n \ket{\alpha} = 0 \ (n > 0),\quad
 a_0 \ket{\alpha} = \alpha \ket{\alpha} \quad
 b_k \ket{\alpha} = 0 \ (k > 0).
\end{align}
\end{dfn}

\begin{fct}\label{fct:bosonization}
\begin{enumerate}
\item 
For $\rho \in \bbC$, the map 
\begin{align}
\label{eq:bosonization}
\begin{array}{lll}
\displaystyle
T(z) := \sum_{n\in\bbZ} L_n z^{-n-2}
&\longmapsto 
&\displaystyle
  \dfrac{1}{2} \no a(z)^2 \no + \rho \, (\partial a)(z) + 
  \dfrac{1}{2} \no (\partial b)(z) \, b(z) \no 
\\
\displaystyle
G(z) := \sum_{k \in \bbZ+1/2} G_k z^{-k-3/2} 
&\longmapsto 
&\displaystyle
 b(z) \, a(z) + 2 \rho \, (\partial b)(z),
\\
\displaystyle
C
&\longmapsto 
&c = \dfrac{3}{2}-12\rho^2,
\end{array}
\end{align}
gives an injective algebra homomorphism 
$$
 \ff=\ff_{\rho}: U(\SVir) \longto \wh{U}(\BF).
$$
Here $\wh{U}(\BF)$ is a certain completion of 
the enveloping algebra $U(\BF)$ of $\BF$. 

\item
The linear map 
\begin{align}\label{eq:bosonization:MF}
 M(c,h) \longto \calF(\alpha)
\end{align}
induced by  $\ket{c,h} \longmapsto \ket{\alpha}$
and $\ff_{\rho}$ in (1) 
defines a homomorphism of $\SVir$-modules if 
\begin{align}\label{eq:h-alpha}
 h=\dfrac{\alpha^2}{2}-\rho \alpha.
\end{align}
\end{enumerate}
\end{fct}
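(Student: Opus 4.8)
The plan is to establish part (1) by verifying operator product expansions and then to deduce part (2) as an application. Throughout I use that the defining (anti)commutators of $\SVir$ in Definition \ref{dfn:SVir} are equivalent to the current OPEs recorded in Remark \ref{rmk:SVir:current}: each bracket $[A_m,B_n]_{\pm}$ is recovered as a residue extracting the singular part of $A(z)B(w)$. Hence, to show that the assignment \eqref{eq:bosonization} respects the relations of $\SVir$, it suffices to compute the three OPEs $T(z)T(w)$, $T(z)G(w)$, $G(z)G(w)$ of the bosonized currents and to match them against Remark \ref{rmk:SVir:current} after the substitution $C\mapsto c=3/2-12\rho^2$. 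All computations are carried out by Wick's theorem from the two elementary contractions in \eqref{eq:HA:current}, namely $a(z)a(w)\sim(z-w)^{-2}$ and $b(z)b(w)\sim(z-w)^{-1}$, together with the rule that $a$ and the odd field $b$ have vanishing mutual contraction and that passing one $b$ through another produces a sign.

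First I would treat $T(z)T(w)$. The bosonic summand $\tfrac12\no a(z)^2\no+\rho(\partial a)(z)$ is the free-boson stress tensor with background charge $\rho$, contributing central charge $1-12\rho^2$, while the fermionic summand $\tfrac12\no(\partial b)(z)\,b(z)\no$ is the free-fermion stress tensor, contributing $\tfrac12$; the quartic pole therefore carries coefficient $\tfrac{c}{2}$ with $c=\tfrac32-12\rho^2$, and the double and simple poles reorganize into $\tfrac{2}{(z-w)^2}T(w)+\tfrac{1}{z-w}(\partial T)(w)$. Next, $T(z)G(w)$ must reproduce $\tfrac{3/2}{(z-w)^2}G(w)+\tfrac{1}{z-w}(\partial G)(w)$, i.e.\ $G$ is primary of conformal weight $3/2$; here the background-charge term of $T$ acting on $b\,a$ and the $2\rho(\partial b)$ summand of $G$ must combine to yield weight $3/2$ rather than $1$, which is the first point at which the precise coefficients in \eqref{eq:bosonization} are used.

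The main obstacle is the OPE $G(z)G(w)$, where the fermionic signs must be handled carefully. Writing $G=b\,a+2\rho(\partial b)$ and expanding, the term $b(z)a(z)\,b(w)a(w)$ contributes through its double contraction the cubic pole $(z-w)^{-3}$ with coefficient $1$, and through its two single contractions the kinetic pieces $\tfrac12\no a^2\no$ and $\tfrac12\no(\partial b)\,b\no$ of $T(w)$; the cross terms containing $2\rho(\partial b)$ supply the remaining background-charge piece $\rho(\partial a)$ of $T(w)$ and cancel a would-be double pole, while the term $4\rho^2(\partial b)(z)(\partial b)(w)$ contributes $-8\rho^2$ to the cubic pole. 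Summing, the cubic pole has coefficient $1-8\rho^2=\tfrac{2c}{3}$ and the simple pole is exactly $\tfrac{2}{z-w}T(w)$; the simultaneous agreement of both poles is what pins down $c=\tfrac32-12\rho^2$ and completes the proof that $\ff_\rho$ is an algebra homomorphism. For injectivity (understood with the central element specialized to $c$) I would argue on the PBW basis: under \eqref{eq:bosonization} the ordered monomials $L_{-\lambda^a}G_{-\lambda^b}\cdots$ of $U(\SVir)$ have as leading free-field part mutually distinct normal-ordered monomials in the modes $a_n,b_k$, which are linearly independent in $\wh{U}(\BF)$, so no nonzero element can lie in the kernel.

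Finally, for part (2), part (1) makes $\calF(\alpha)$ into an $\SVir$-module, the completion $\wh{U}(\BF)$ being chosen precisely so that the modes $L_n,G_k$ act (locally finitely) on Fock space. It remains to check that $\ket{\alpha}$ is a highest weight vector of the asserted weight. From \eqref{eq:BF:rel} one reads off $L_n\ket{\alpha}=0$ and $G_k\ket{\alpha}=0$ for $n,k>0$, since after normal ordering every summand of $L_n$ or $G_k$ with positive index carries an annihilation mode $a_{>0}$ or $b_{>0}$ to the right, and $C\ket{\alpha}=c\ket{\alpha}$ is immediate. Isolating the zero mode of $T(z)$, the fermionic part annihilates $\ket{\alpha}$ and the bosonic zero mode gives $L_0\ket{\alpha}=\bigl(\tfrac12 a_0^2-\rho a_0\bigr)\ket{\alpha}=\bigl(\tfrac12\alpha^2-\rho\alpha\bigr)\ket{\alpha}$, so $\ket{\alpha}$ has weight $h=\tfrac12\alpha^2-\rho\alpha$ exactly as in \eqref{eq:h-alpha}. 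Since the Verma module $M(c,h)$ is freely generated from its highest weight vector subject only to the highest weight relations, its universal property shows that $\ket{c,h}\mapsto\ket{\alpha}$ extends to a homomorphism of $\SVir$-modules precisely when those relations hold at $\ket{\alpha}$, i.e.\ precisely when \eqref{eq:h-alpha} is satisfied.
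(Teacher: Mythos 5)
Your proposal is correct and follows essentially the same route as the paper, which likewise reduces (1) to verifying the three current OPEs of Remark \ref{rmk:SVir:current} against the contractions \eqref{eq:HA:current} and obtains (2) as an immediate corollary via the zero-mode computation $\ff(L_0)\ket{\alpha}=\bigl(a_0^2/2-\rho a_0\bigr)\ket{\alpha}$, yielding \eqref{eq:h-alpha}. You merely supply in full the Wick-theorem details (including the cancellation of the cubic pole in $T(z)G(w)$ and the coefficient $1-8\rho^2=2c/3$ in $G(z)G(w)$) and an injectivity argument that the paper leaves implicit, all of which check out.
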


Equivalently, the map $\ff$ is given by
\begin{align*}
&\ff(L_n) = \dfrac{1}{2} \sum_{m \in \bbZ} \no a_{n} a_{m-n} \no
            - \rho (n+1) a_n
            - \dfrac{1}{2} \sum_{k \in \bbZ+1/2} 
              \left(k+\dfrac{1}{2}\right) \no b_{k} b_{n-k} \no,
\\
&\ff(G_k) = \sum_{m \in \bbZ} b_{k-m} a_{m} -2\rho\left(k+\dfrac{1}{2}\right)b_k.
\end{align*}

\begin{proof}
(1) can be shown by using \eqref{eq:BF:rel} 
and Remark \ref{rmk:SVir:current} (2).
(2) is an immediate corollary of (1).
The condition \eqref{eq:h-alpha} is the consequence of 
$\ff(L_0)\ket{\alpha} = \left(a_0^2/2 - \rho a_0\right)\ket{\alpha}$.
\end{proof}

\subsection{Singular vectors via screening operators}

The proof of the Kac determinant formula \eqref{eq:Kac}
is reduced to the construction of the singular vectors 
via bosonization.

\begin{dfn}\label{dfn:singular}
Let $M$ be a $\SVir$-module.
An element $\chi \in M$ is called a singular vector of $M$ if
$$
 L_n \chi = 0\ (n>0),\quad 
 G_k \chi = 0\ (k>0).
$$
\end{dfn}

\begin{eg}
The Verma module $M(c,h)$ with 
$$
 c= \dfrac{3}{2}-12\rho^2,\quad
 h^{\NS}_{r,s} = \dfrac{1}{8}\left(r t_{+} + s t_{-} \right)^2 - \dfrac{\rho^2}{2}
$$ 
given in Fact \ref{eq:Kac} 
has a singular vector $\chi_{r,s}$.
For $r s \le 3$ it is given by 
\begin{align*}
&\chi_{1,1} = G_{-1/2}\ket{c,h^{\NS}_{1,1}},\\
&\chi_{3,1} = 
 \left(L_{-1}G_{-1/2}-t^{ 2}_{+}G_{-3/2} \right)\ket{c,h^{\NS}_{3,1}},\\
&\chi_{1,3} = 
 \left(L_{-1}G_{-1/2}-t^{2}_{-}G_{-3/2} \right)\ket{c,h^{\NS}_{1,3}}.
\end{align*}
\end{eg}

Hereafter we identify elements of $\SVir$ as those of $\wh{U}(\BF)$ 
under the bosonization map $\ff$ in Fact \ref{fct:bosonization}
and write $L_n = \ff(L_n)$ etc., suppressing the symbol $\ff$.
Likewise we may consider elements of $M(c,h)$ as those of $\calF(\alpha)$ 
mapped by the intertwiner \eqref{eq:bosonization:MF}.

\begin{eg}\label{eg:chi:Fock}
Let us consider the image of $\chi_{r,s}$ under \eqref{eq:bosonization:MF}.
By the definition \eqref{eq:hrsNS} of $h^{\NS}_{r,s}$ and 
the correspondence \eqref{eq:h-alpha} of highest weights,
we have the intertwiner 
$$
 M(c,h_{r,s}) \longto \calF(\alpha_{r,s}),\quad
 \alpha_{r,s} := \dfrac{r+1}{2} t_{+} + \dfrac{s+1}{2} t_{-}.
$$
Under this map we have 
\begin{align*}
G_{-1/2}\ket{c,h_{r,s}} & \longmapsto 
  \alpha_{r,s} b_{-1/2}\ket{\alpha_{r,s}},\\
G_{-3/2}\ket{c,h_{r,s}} & \longmapsto  
  \left(a_{-1}b_{-1/2}+(2\rho+\alpha_{r,s})b_{-3/2}\right)\ket{\alpha_{r,s}},
\\
L_{-1}G_{-1/2}\ket{c,h_{r,s}} & \longmapsto 
 \left(\alpha_{r,s}^2 a_{-1}b_{-1/2}+ \alpha_{r,s} b_{-3/2}\right)\ket{\alpha_{r,s}}
\end{align*}
so that, using $t_{\pm}^2-2\rho t_{\pm} =1$, we find 
\begin{align*}
\chi_{1,1} \longmapsto 
& \alpha_{1,1}b_{-1/2} \ket{\alpha_{1,1}}
 = \left(t_{+}+t_{-}\right) b_{-1/2} \ket{\alpha_{1,1}},
\\
\chi_{3,1} \longmapsto 
& \left(\left(\alpha_{3,1}^2-t_{+}^2\right) a_{-1}b_{-1/2} 
       +\left(\alpha_{3,1}(1-t_{+}^2)-2\rho t_+^{2}\right)b_{-3/2}\right)
  \ket{\alpha_{3,1}}\\
&
  = \left(t_{+}+t_{-}\right) \left(3t_{+}+t_{-}\right)
    \left(a_{-1}b_{-1/2} -t_{+} b_{-3/2}\right) \ket{\alpha_{3,1}}
\\
\chi_{1,3} \longmapsto  
&
 \left(\left(\alpha_{1,3}^2-t_{-}^2\right) a_{-1}b_{-1/2} 
        +\left(\alpha_{1,3}(1-t_{-}^2)-2\rho t_{-}^2\right)b_{-3/2}\right)
   \ket{\alpha_{1,3}}\\
&
  = \left(t_{+}+t_{-}\right) \left(t_{+}+3t_{-}\right)
    \left(a_{-1}b_{-1/2} - t_{-} b_{-3/2}\right) \ket{\alpha_{1,3}}.
\end{align*}
\end{eg}

One can construct singular vectors $\chi_{r,s}$ realized in  $\calF(\alpha)$
by the so-called screening operators.
To write down the screening operators, let us introduce

\begin{dfn}
Let $\wh{\BF}$ be the extended Lie superalgebra of $\BF$ by the element 
$\pi_0$ such that $[a_0,\pi_0]=1$.
\end{dfn}

\begin{rmk}
For $t \in \bbC$, the expression 
$$
 e^{t \pi_0} := \sum_{n=0}^\infty \dfrac{1}{n!} t^n \pi_0^n
$$ 
can be considered as an intertwiner $\calF(\alpha) \to \calF(\alpha+t)$ 
of $\BF$-modules since
\begin{align}\label{eq:etp-intertwiner}
a_0 e^{t \pi_0} \ket{\alpha} = (t+\alpha) e^{t \pi_0} \ket{\alpha}
\end{align}
so we may identify $ e^{t \pi_0} \ket{\alpha}$ and $\ket{\alpha+t}$.
\end{rmk}

\begin{fct}[\cite{KM}]
For $t \in \bbC$, the element 
\begin{align*}
W_t(z) := t b(z) V_t(z)
\end{align*}
of $\wh{U}(\wh{\BF})$  with  
\begin{align*}
V_t(z) := 
         \exp\left( t\sum_{n>0} \dfrac{1}{n} a_{-n} z^n\right) 
         \exp\left(-t\sum_{n>0} \dfrac{1}{n} a_{n} z^{-n}\right) 
         e^{t \pi_0} z^{t a_0}
\end{align*}
satisfies the following relations.
\begin{align*}
&T(z) W_t (w) \sim 
 \dfrac{1}{(z-w)^2}\left(\dfrac{1}{2}t^2-\rho t + \dfrac{1}{2}\right) W_t(w)
 + \dfrac{1}{z-w}(\partial W_t)(w),\\
&G(z) W_t (w) \sim 
 \dfrac{1}{(z-w)^2} \left(t^2-2\rho t\right) V_t(w)
 + \dfrac{1}{z-w} (\partial V_t)(w).
\end{align*}
\end{fct}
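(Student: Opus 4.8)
The plan is to reduce both operator product expansions to elementary free-field contractions, exploiting the fact that under the bosonization \eqref{eq:bosonization} the bosonic modes $a_n$ and the fermionic modes $b_k$ commute. Decompose the energy--momentum field as $T(z) = T_a(z) + T_b(z)$, where $T_a(z) := \frac12 \no a(z)^2 \no + \rho\,(\partial a)(z)$ is the bosonic (Coulomb-gas) part and $T_b(z) := \frac12 \no (\partial b)(z)\, b(z)\no$ is the free-fermion part. By Wick's theorem every OPE below is then built from the elementary contractions of \eqref{eq:HA:current}, namely $a(z)a(w) \sim (z-w)^{-2}$ and $b(z)b(w) \sim (z-w)^{-1}$, together with two facts that follow immediately from the explicit form of $V_t(w)$ and $[a_m,a_n]=m\,\delta_{m+n,0}$: the charge contraction
\[
 a(z)\, V_t(w) \sim \frac{t}{z-w}\, V_t(w),
\]
and the descendant identity $(\partial V_t)(w) = t \no a(w)\, V_t(w)\no$.

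For the first relation I would first verify that $V_t$ is a $T_a$-primary field of conformal weight $h_t := \frac12 t^2 - \rho t$, that is,
\[
 T_a(z)\, V_t(w) \sim \frac{h_t}{(z-w)^2}\, V_t(w) + \frac{1}{z-w}\,(\partial V_t)(w).
\]
This is the standard Coulomb-gas computation: the double charge contraction in $\frac12 \no a^2\no$ produces $\frac{t^2/2}{(z-w)^2}\,V_t(w)$, the background term $\rho\,(\partial a)(z)$ contributes $-\frac{\rho t}{(z-w)^2}\,V_t(w)$, and the single charge contraction assembles into the derivative term through the descendant identity. Separately, a one-line free-fermion contraction shows that $b$ is $T_b$-primary of weight $\frac12$, i.e. $T_b(z)\,b(w) \sim \frac{1/2}{(z-w)^2}\,b(w) + \frac{1}{z-w}\,(\partial b)(w)$. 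Since $T_a$ acts only on the $V_t$-factor of $W_t = t\, b\, V_t$ and $T_b$ only on the $b$-factor, adding the two expansions and using the Leibniz rule $\partial(b V_t) = (\partial b) V_t + b\,(\partial V_t)$ yields the first claimed OPE with double-pole coefficient $h_t + \frac12 = \frac12 t^2 - \rho t + \frac12$.

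The second relation is the genuinely mixed computation, and is where the care is needed. Write $G(z) = b(z)a(z) + 2\rho\,(\partial b)(z)$ and $W_t(w) = t\, b(w) V_t(w)$. Since $a$ commutes with the fermions, the first piece factors as $t\, b(z)b(w)\cdot a(z)V_t(w)$, and expanding each factor by its elementary OPE leaves three singular contributions. The double contraction (pairing $b(z)$ with $b(w)$ and $a(z)$ with $V_t(w)$) gives $\frac{t^2}{(z-w)^2}V_t(w)$; the contraction of $b(z)$ with $b(w)$ alone, with $a(z)$ kept normal-ordered against $V_t(w)$, gives after Taylor expansion and the descendant identity $\frac{1}{z-w}\no a(w)V_t(w)\no = \frac{1}{t(z-w)}(\partial V_t)(w)$, which becomes $\frac{1}{z-w}(\partial V_t)(w)$ after the overall factor $t$; and the contraction of $a(z)$ with $V_t(w)$ alone is regular, because $\no b(w)b(w)\no = 0$ by fermionic antisymmetry. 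Finally, differentiating $b(z)b(w)\sim(z-w)^{-1}$ in the second piece $2\rho t\,(\partial b)(z)b(w)V_t(w)$ produces the double pole $-\frac{2\rho t}{(z-w)^2}V_t(w)$. Summing the four terms gives the second claimed OPE.

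The main obstacle is bookkeeping rather than conceptual. In the supercurrent computation one must keep the fermionic signs straight --- in particular the antisymmetry $\no b(w)b(w)\no = 0$, which removes what would otherwise be a spurious singular term --- and one must recognise that the double-pole coefficient $t^2 - 2\rho t$ comes from two unrelated sources, the bosonic charge $t^2$ of $V_t$ and the fermionic double pole $-2\rho t$ from $(\partial b)\,b$, while the single pole is produced not by any charge contraction but by the descendant identity $(\partial V_t) = t\no a V_t\no$. Once these elementary contractions and the descendant identity are in place, both relations follow mechanically from Wick's theorem applied to the free-field realisation \eqref{eq:bosonization}.
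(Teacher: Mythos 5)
Your proof is correct and takes essentially the same approach as the paper's: the same elementary contractions $a(z)V_t(w)\sim\frac{t}{z-w}V_t(w)$ and $(\partial V_t)(w)=t\no a(w)V_t(w)\no$, the weight $\frac{1}{2}t^2-\rho t$ primary property of $V_t$, and the weight $\frac{1}{2}$ fermion OPE, assembled by Wick calculus. The only difference is presentational: the paper derives these contractions via $V_t(z)=\no e^{t\varphi(z)}\no$ with $\varphi(z)\varphi(w)\sim\log(z-w)$ and leaves the $G$-relation as ``similarly obtained,'' whereas you write that mixed computation out in full (correctly, including the cancellation of the would-be extra pole via $\no b(w)b(w)\no=0$).
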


\begin{proof}
The result follows from the usual calculus using 
$$
V_t(z) = \no e^{t \varphi(z)} \no, \quad 
\varphi(z) := \pi_0 + a_0 \log z - \sum_{n \neq 0}\dfrac{1}{n}a_n z^{-n}.
$$
For the completeness we sketch the outline.
We first note
$$
 (\partial \varphi)(z)=a(z),\quad 
 \varphi(z) \, \varphi(w) \sim \log(z-w).
$$
Then we immediately have
\begin{align*}
&a(z) V_t(w) 
  \sim \dfrac{t}{z-w} V_t(w),\quad
 (\partial a)(z) V_t(w) 
  \sim \dfrac{-t}{(z-w)^2} V_t(w),\\
&\no a(z)^2 \no \, V_t(w) 
  \sim \dfrac{t^2}{(z-w)^2} V_t(w)
       + \dfrac{2t}{z-w}\no a(w) V_t(w) \no .
\end{align*}
Recalling the bosonization map \eqref{eq:bosonization} 
in Fact \ref{fct:bosonization}, we have the standard result 
\begin{align*}
T(z) V_t(w) \sim 
 \dfrac{1}{(z-w)^2} \left(\dfrac{1}{2}t^2-\rho t\right) V_t(w) 
 + \dfrac{1}{z-w} (\partial V_t)(w).
\end{align*}
Now the first relation follows easily by this result and
$$
 \no (\partial b)(z) b(z) \no \, b(w) \sim 
 \dfrac{2}{z-w}(\partial b)(w) + \dfrac{1}{(z-w)^2} b(w).
$$
The second relation can be similarly obtained.
\end{proof}


As a corollary, 
if $t^2-2\rho t =1$, namely if $t = t_{\pm}$ defined at \eqref{eq:tpm},
then
\begin{align*}
&T(z) W_{t_{\pm}} (w) \sim 
 \dfrac{\partial}{\partial z} \left(\dfrac{1}{z-w} W_{t_{\pm}}(w)\right),\quad
 G(z) W_{t_{\pm}} (w) \sim 
 \dfrac{\partial}{\partial z} \left(\dfrac{1}{z-w} V_{t_{\pm}}(w)\right).
\end{align*}
Thus symbolically we have 
\begin{align*}
 \left[T(z), \oint dw W_{t_{\pm}}(w) \right] = 0,\quad
 \left[G(z), \oint dw W_{t_{\pm}}(w) \right] = 0.
\end{align*}
It implies that $\oint dz W_{t_{\pm}}(z)$
gives rise singular vectors of the Fock module 
(viewed as an $\SVir$-module under the map $\ff$).
$\oint dz W_{t_{\pm}}(z)$ is  called the screening operator.

\begin{fct}[\cite{KM}]\label{fct:singular}
For $\ep := \pm$ and $r,s \in \bbZ_{>0}$ with $r \equiv s \pmod{2}$,
there is an integration path $C$ such that the element
\begin{align}\label{eq:chi_rs}
 \chi^{\ep}_{r,s} := 
  \oint_C d z_1 \cdots d z_r 
   W_{t_{\ep}}(z_1) \cdots W_{t_{\ep}}(z_r) \ket{\alpha_{r,s}^\ep - rt_{\ep}}
\end{align}
is a non-zero singular vector of $\SVir$-module $\calF(\alpha_{r,s}^\ep)$ with 
$$
 \alpha_{r,s}^{\pm} := \dfrac{r+1}{2} t_{\pm} + \dfrac{s+1}{2} t_{\mp}.
$$
Here we choose $\rho$ appearing in the bosonization map 
$\ff:U(\SVir)\to\wh{U}(\BF)$ to be
$$
 \rho = \dfrac{t_{\ep}}{2} - \dfrac{1}{2 t_{\ep}} = \alpha_{0,0}^{\ep}.
$$
\end{fct}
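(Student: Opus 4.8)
The plan is to verify three assertions in turn: that $\chi^{\ep}_{r,s}$ really lies in $\calF(\alpha_{r,s}^{\ep})$, that it is annihilated by every positive mode, and that it is non-zero. The first two are formal consequences of the vertex-operator calculus already assembled above, whereas the non-vanishing carries the genuine content. For the Fock-space membership I would argue by charge counting: each factor $W_{t_\ep}(z_i)=t_\ep\,b(z_i)V_{t_\ep}(z_i)$ contains the shift $e^{t_\ep\pi_0}$ inside $V_{t_\ep}$, which by \eqref{eq:etp-intertwiner} raises the $a_0$-eigenvalue by $t_\ep$. Applying $r$ of them to $\ket{\alpha_{r,s}^{\ep}-rt_\ep}$ produces a vector of $a_0$-charge $(\alpha_{r,s}^{\ep}-rt_\ep)+rt_\ep=\alpha_{r,s}^{\ep}$, so $\chi^{\ep}_{r,s}\in\calF(\alpha_{r,s}^{\ep})$, as required.

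For the singular-vector property I would build on the symbolic identities $[T(z),\oint dw\,W_{t_\ep}(w)]=0$ and $[G(z),\oint dw\,W_{t_\ep}(w)]=0$ recorded just above the statement, whose origin is that $[T(z),W_{t_\ep}(w)]$ and $[G(z),W_{t_\ep}(w)]$ are total derivatives in the integration variable $w$ (this is precisely where $t_\ep^2-2\rho t_\ep=1$ enters). Lifting this to the $r$-fold product, the commutator of a positive mode $L_n$ or $G_k$ with $\prod_i W_{t_\ep}(z_i)$ becomes a sum of total derivatives $\partial_{z_i}(\cdots)$, one in each integration variable. As long as $C$ is a genuine closed cycle for the (multivalued) integrand, every such term integrates to zero, so $L_n$ and $G_k$ pass through the screening integral and act directly on the Fock highest weight vector $\ket{\alpha_{r,s}^{\ep}-rt_\ep}$, which they annihilate for $n,k>0$. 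Hence $L_n\chi^{\ep}_{r,s}=G_k\chi^{\ep}_{r,s}=0$ for all $n,k>0$, i.e.\ $\chi^{\ep}_{r,s}$ is singular.

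The main obstacle is the non-vanishing, which I would reduce to a Selberg integral. Normal-ordering the bosonic factors $\prod_i V_{t_\ep}(z_i)$ produces the Jastrow weight $\prod_{i<j}(z_i-z_j)^{t_\ep^2}$ together with monomial prefactors $\prod_i z_i^{\kappa}$, the exponent $\kappa$ being fixed by $\alpha_{r,s}^{\ep}$ and $r$, while the Clifford fields contribute an antisymmetric factor through $b(z)b(w)\sim(z-w)^{-1}$. Projecting $\chi^{\ep}_{r,s}$ onto a single, suitably extremal PBW vector collapses the $r$-fold contour integral into one of Selberg type,
$$
 \oint_C\prod_{i=1}^r dz_i\,\prod_{1\le i<j\le r}(z_i-z_j)^{t_\ep^2}\,\prod_{i=1}^r z_i^{a}(1-z_i)^{b},
$$
whose value, by \cite{S}, is an explicit product of ratios of Gamma functions that does not vanish at the relevant $t_\ep$, $r$, $s$; this is the computation carried out in \S\ref{sect:Selberg}. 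The delicate point --- and the reason a bare contour symbol does not suffice --- is that $C$ must be chosen to be simultaneously a closed cycle in the twisted homology of the local system defined by $\prod_{i<j}(z_i-z_j)^{t_\ep^2}\prod_i z_i^{a}(1-z_i)^{b}$, as the singularity argument demands, and a cycle carrying a non-trivial Selberg period, as the non-vanishing demands. Reconciling these two requirements, for instance by a Pochhammer-type multiple loop, is the crux of the construction.
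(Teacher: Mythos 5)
Your first two steps reproduce exactly what the paper does around this statement: the charge count via the shift $e^{t_{\ep}\pi_0}$ and \eqref{eq:etp-intertwiner} is the paper's remark immediately after the Fact, and the total-derivative screening argument, hinging on $t_{\ep}^2-2\rho t_{\ep}=1$ so that $T(z)W_{t_{\ep}}(w)$ and $G(z)W_{t_{\ep}}(w)$ become exact in $w$, is the computation the paper carries out just before it. Your insistence that $C$ be a closed cycle in the twisted homology of the multivalued integrand (the $W$--$W$ OPE has branching $(z_i-z_j)^{t_{\ep}^2-1}$) is also the right framing of the issue the paper singles out. Note, however, that the paper states this result as a Fact quoted from \cite{KM} and does not itself supply a complete proof; so the real question is whether your third step closes the one point the paper leaves open.

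It does not, and the gap is concrete. After normal ordering, the integrand is not the bare Selberg weight: besides $\prod_{i<j}(z_i-z_j)^{t_{\ep}^2}\prod_i z_i^{\kappa}$ from the bosonic vertex operators, the fermionic product $b(z_1)\cdots b(z_r)$ contributes, upon projecting onto a PBW vector, a Pfaffian-type factor coming from $b(z)b(w)\sim (z-w)^{-1}$ --- a sum over pairings of factors $(z_{i}-z_{j})^{-1}$, with leftover $b$-modes as parity requires. So the integral whose non-vanishing you need is a Pfaffian-deformed (super) Selberg integral, and Selberg's product formula does not apply to it directly; your displayed integral, which drops this factor, is the wrong reduction. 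Your appeal to \S\ref{sect:Selberg} is moreover a misattribution: that appendix reviews only the non-super Virasoro integral $I(m)=S_r\left(m;(1-r)t,1,t\right)$ relevant to \cite{MY}, and the paper explicitly flags that the path-existence statement there concerns the non-super case (``already stated and proven by \cite{TK}''), deferring the super case to \cite{KM}. A smaller symptom of the same looseness: your weight $\prod_i z_i^{a}(1-z_i)^{b}$ has no source for the factor $(1-z_i)^{b}$ in this construction --- no vertex operator is inserted at $z=1$; in the Virasoro analogue one has $\beta=1$, i.e.\ $b=0$. Producing (or correctly citing from \cite{KM}) a cycle on which the Pfaffian-deformed integral is non-zero is precisely the crux you yourself identify, and your proposal leaves exactly that unestablished.
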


Since $W_t(z)$ contains the intertwiner $e^{t \pi_0}$,
the element $\chi^{\ep}_{r,s}$ is an element of $\calF(\alpha^{\ep}_{r,s})$ 
by \eqref{eq:etp-intertwiner}. 
The proof of the Kac formula \eqref{eq:Kac} 
can be done by counting singular vectors at each level $M(c,h)_n$ 
constructed in Fact \ref{fct:singular}.
Here one should note that under the homomorphism
$M(c,h) \to \calF(\alpha)$ with the correspondence 
\eqref{eq:h-alpha}, we have
$$
 h=\dfrac{\left(\alpha_{r,s}^{\pm}\right)^2}{2}-\rho \alpha_{r,s}^\pm
  =\dfrac{\left(r t_{\pm}^2-s\right)^2-\left(t_{\pm}^2-1\right)^2}{8t_{\pm}^2}
  =h_{r,s}^\NS.
$$
We omit the detail of the proof 
and only address a technical but important issue:
the existence of the integration path $C$.
In the Virasoro case the corresponding integral is 
nothing but the Selberg integral \cite{S},
which will be reviewed in \S \ref{sect:Selberg}.

\subsection{Conjecture of Belavin-Bershtein-Tarnopolsky}

Let us denote by $\Lambda_{\bbC}$ the space of symmetric functions 
(see Notations in \S\ref{sect:intro}).
As is well-known, the bosonic Fock module is naturally identified with 
$\Lambda_{\bbC}$ by identifying the $n$-th bosonic generator $a_{-n}$ 
with the $n$-th power-sum symmetric functions.
In our situation we should also consider the fermionic part,
and it is natural to use the boson-fermion correspondence.
Following \cite{BBT}, let us use the following one.

\begin{lem}\label{lem:sf}
For a non-zero complex number $t \in \bbC$, 
the correspondence
\begin{align*}
&a_{n}  \longmapsto -2t n  \dfrac{\partial}{\partial p_{2 n}}, \quad
 a_{-n} \longmapsto -\dfrac{1}{2t} p_{2 n},\quad (n>0),\quad 
 b(z)   \longmapsto 
 \dfrac{1}{2\sqrt{2}} 
 \left(e^{\phi_-(z)} e^{2\phi_+(z)} - e^{-\phi_-(z)} e^{-2\phi_+(z)} \right)
\end{align*}
with
\begin{align*}
  \phi_{+}(z) := 
    \sum_{n \in \bbZ_{>0}}\dfrac{\partial}{\partial p_{2n-1}}z^{-n+1/2},\quad
  \phi_{-}(z) := 
   -\sum_{n \in \bbZ_{>0}}\dfrac{p_{2n-1}}{2n-1}z^{n-1/2}
\end{align*}
induces an isomorphism
$$
 \fs_t: \calF(\alpha) \longsimto \Lambda_{\bbC}
$$ 
of $\BF$-module with $\ket{\alpha} \longmapsto 1$.
\end{lem}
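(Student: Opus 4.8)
The plan is to verify that the stated assignment genuinely defines an action of $\BF$ on $\Lambda_{\bbC}$, that the constant function $1$ is a highest weight vector of the correct weight, and that the resulting $\BF$-module map is bijective. The remark that organizes the whole argument is that the bosonic generators $a_n$ are built only from the even power sums $p_{2n}$ and the operators $\partial/\partial p_{2n}$, whereas $b(z)$ is built, through $\phi_{\pm}$, only from the odd power sums $p_{2n-1}$ and $\partial/\partial p_{2n-1}$. Since the even and the odd power sums are algebraically independent generators of $\Lambda_{\bbC}$, the relation $[a_n,b_k]=0$ is immediate, and the verification of the defining relations of $\BF$ splits into an independent bosonic part and fermionic part.

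For the bosonic part I would check $[a_m,a_n]=m\delta_{m+n,0}$ directly: for $m,n>0$ both images are commuting constant-coefficient differential operators, for $m,n<0$ both are multiplication operators, and in the mixed case $a_m=-2tm\,\partial/\partial p_{2m}$ and $a_{-m}=-\tfrac{1}{2t}p_{2m}$ give $[a_m,a_{-m}]=m\,[\partial/\partial p_{2m},p_{2m}]=m$, using $[\partial/\partial p_{2m},p_{2n}]=\delta_{m,n}$. The same formulas show $a_n\cdot 1=0$ for $n>0$ and that $a_0$ acts by the scalar $\alpha$, which matches the relations defining $\ket{\alpha}$.

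The heart of the proof, and the step I expect to be the main obstacle, is the Clifford relation $[b_k,b_l]_+=\delta_{k+l,0}$, equivalently the operator product $b(z)b(w)\sim\tfrac{1}{z-w}$ recorded in \eqref{eq:HA:current}. Here I would compute the vertex-operator OPE by hand. The only nontrivial contraction is $[\phi_+(z),\phi_-(w)]$; summing the resulting geometric series in $\sqrt{w/z}$ produces a logarithm, so that $e^{\pm[2\phi_+(z),\phi_-(w)]}=\bigl(\tfrac{\sqrt z-\sqrt w}{\sqrt z+\sqrt w}\bigr)^{\mp 1}$. Expanding $b(z)b(w)$ into its four $e^{\pm\phi_-}e^{\pm 2\phi_+}$ cross terms, the two terms of equal chirality pick up the regular factor $\tfrac{\sqrt z-\sqrt w}{\sqrt z+\sqrt w}$ while the two of opposite chirality pick up the singular factor $\tfrac{\sqrt z+\sqrt w}{\sqrt z-\sqrt w}$. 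I would then check that the normalization $\tfrac{1}{2\sqrt2}$ together with these branch factors collapses the singular contribution to exactly $\tfrac{1}{z-w}$, and that the regular (normal-ordered) pieces cancel in the anticommutator $b(z)b(w)+b(w)b(z)$. The delicate points are the careful handling of the branch of $\sqrt{z}$, the symmetrization that kills the non-polar part, and confirming $b_k\cdot 1=0$ for $k>0$ from the fact that $b(z)\cdot 1=\tfrac{1}{2\sqrt2}\bigl(e^{\phi_-(z)}-e^{-\phi_-(z)}\bigr)$ carries no modes lowering the vacuum.

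Finally, for bijectivity I would proceed structurally rather than by brute force. Once the relations are established, $\Lambda_{\bbC}$ is a $\BF$-module and $1$ is a highest weight vector of weight $\alpha$, so by the universal (induced-module) property of the Fock module there is a unique $\BF$-homomorphism $\fs_t:\calF(\alpha)\to\Lambda_{\bbC}$ with $\ket{\alpha}\mapsto 1$. Since the Heisenberg–Clifford Fock module $\calF(\alpha)$ is irreducible, this $\fs_t$ is automatically injective. For surjectivity I would show that $1$ generates $\Lambda_{\bbC}$ under $\BF$: the operators $a_{-n}$ produce all even power sums $p_{2n}$, while the negative modes of $b(z)$, applied repeatedly to $1$, produce the odd power sums together with their products, which can be pinned down by a triangularity argument identifying the leading monomial of each such vector. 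Injectivity together with this cyclic generation yields the isomorphism, and the agreement of graded characters is then a consequence of the underlying boson–fermion correspondence.
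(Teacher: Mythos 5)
Your verification of the defining relations of $\BF$ is the same computation as the paper's proof: the paper's argument consists precisely of the two contraction identities you describe (the geometric series in $\sqrt{w/z}$ summing to $\tfrac{1}{2}\log\tfrac{1-\sqrt{w/z}}{1+\sqrt{w/z}}$, and the resulting branch factors on the four cross terms), together with the unstated observation, which you make explicit, that $[a_n,b_k]=0$ is automatic because the bosons involve only even power sums and the fermion only odd ones. Your injectivity argument (irreducibility of the Heisenberg--Clifford Fock module, so that any nonzero module map out of it is injective) is correct and goes beyond the paper, whose proof says nothing about bijectivity.

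The genuine gap is your surjectivity step, and it cannot be repaired, because the map is in fact not surjective. The images of the $b_k$ are built solely from $p_1,p_3,p_5,\dots$ and their derivatives, and they satisfy Clifford relations; hence the span of all vectors $\fs_t(b_{-k_1}\cdots b_{-k_m}\ket{\alpha})$ inside $\bbC[p_1,p_3,p_5,\dots]$ has Poincar\'e series $\prod_{n\ge 1}(1+q^{2n-1})$ (in the grading $\deg p_m=m$, under which $b_{-k}$ raises degree by $2k$), whereas $\bbC[p_1,p_3,p_5,\dots]$ itself has Poincar\'e series $\prod_{n\ge1}(1-q^{2n-1})^{-1}=\prod_{n\ge1}(1+q^{n})$, strictly larger by Euler's identity (the ratio is $\prod_{n\ge1}(1+q^{2n})$). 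Concretely, $p_1^2$ is not in the image: the only vectors of $\calF(\alpha)$ in that degree are multiples of $a_{-1}\ket{\alpha}\mapsto -\tfrac{1}{2t}p_2$ and of $b_{-1/2}^2\ket{\alpha}=\tfrac12[b_{-1/2},b_{-1/2}]_+\ket{\alpha}=0$. So your claim that ``the negative modes of $b(z)$ produce the odd power sums together with their products'' is false, the graded characters of $\calF(\alpha)$ and $\Lambda_{\bbC}$ do not agree, and no triangularity argument can identify leading monomials for vectors that do not exist. This is a defect of the statement itself, not only of your argument: a single Neveu--Schwarz (half-integer moded) fermion does not bosonize onto $\bbC[p_1,p_3,\dots]$ --- the integer-moded neutral fermion of BKP type does --- and what is actually true, and all that the rest of the paper uses, is that the correspondence defines an injective $\BF$-module homomorphism $\calF(\alpha)\to\Lambda_{\bbC}$ onto a proper subspace. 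A smaller wrinkle: if you carry out the OPE exactly as planned with the stated normalization and the paper's mode convention, the anticommutator closes on $-\delta_{k+l,0}$ rather than $+\delta_{k+l,0}$ (absorbable into a factor $\sqrt{-1}$), so ``exactly $1/(z-w)$'' is not what comes out either.
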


Here recall that the family  $\{p_\lambda\}$ of power-sum symmetric functions
is a basis of the space $\Lambda_{\bbQ}$ of symmetric functions.
 
\begin{proof}
We should check that the right hand side of the given correspondence 
satisfy the defining relation of $\BF$.
The relation of $a_n$'s is obviously satisfied.
For $b_n$'s, the defining relation \eqref{eq:HA:current} 
in terms of currents can be proved by 
$$
 \phi_{+}(z^2)\phi_{-}(w^2) \sim 
 \dfrac{1}{2}\log\dfrac{1-w/z}{1+w/z}
$$
and 
$$
  e^{\phi_{-}(z)} e^{2 \phi_{+}(z)} 
  e^{\phi_{-}(w)} e^{2 \phi_{+}(w)} 
 \sim 
  \dfrac{1-\sqrt{w/z}}{1+\sqrt{w/z}}
  \no e^{\phi_{-}(z)} e^{2 \phi_{+}(z)} 
  e^{\phi_{-}(w)} e^{2 \phi_{+}(w)} \no.
$$
\end{proof}

\begin{eg}\label{eg:chi:Lambda}
Applying $\fs_{t_+}$ to the singular vectors in Example \ref{eg:chi:Fock},
we have the following symmetric functions.
Here we suppress the symbol $+$ of $t_+$.
\begin{align*}
&\fs_t(\chi_{1,1}) = \dfrac{t-t^{-1}}{-2t} p_1,\\
&\fs_t(\chi_{3,1}) = 
  \dfrac{\left(t-t^{-1}\right)\left(3t-t^{-1}\right)}{-2t}
  \left(-\dfrac{2t^2}{3} p_3 - p_2 p_1 -\dfrac{t^2}{3} p_1^3\right),\\
&\fs_t(\chi_{1,3}) = 
  \dfrac{\left(t-t^{-1}\right)\left(t-3t^{-1}\right)}{-t}
  \left(\dfrac{1}{3}p_3-\dfrac{1}{2}p_2 p_1+\dfrac{1}{6}p_1^3\right).
\end{align*}
\end{eg}

The main result of this paper is to prove 

\begin{cnj}[{\cite{BBT}}]\label{cnj:main}
Let $t_+ := \rho+\sqrt{\rho^2+1}$ as \eqref{eq:tpm} in Fact \ref{fct:Kac}.
Under the isomorphism 
$\fs_{t_+}: \calF(\alpha^{+}_{r,s}) \to \Lambda_{\bbC}$,
the image of the singular vector $\chi^{+}_{r,s}$ satisfies
$$
 \fs_{t_+} (\chi^{+}_{r,s}) \propto P^{(1/t_{+}^2,2)}_{(r^s)},
$$
where the right hand side is the Uglov symmetric function
$P^{(\gamma,p)}_{\lambda}$ with prescribed parameters.
\end{cnj}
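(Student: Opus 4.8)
The plan is to realise both $\fs_{t_+}(\chi^+_{r,s})$ and $P^{(1/t_+^2,2)}_{(r^s)}$ as the unique element of $\Lambda_{\bbC}$ of the form $m_{(r^s)}+\sum_{\mu<(r^s)}(\ast)\,m_\mu$ that is an eigenfunction of a single operator $\mathcal{C}:=C^1_0(1/t_+^2)$ on $\Lambda_{\bbC}$. First I would push the entire $\SVir$-action on $\calF(\alpha^+_{r,s})$ over to $\Lambda_{\bbC}$ along the isomorphism $\fs_{t_+}$ of Lemma \ref{lem:sf}, so that the modes $a_{-n},b_{-k}$ and the screening charge $\oint_C dz\, W_{t_+}(z)$ of \eqref{eq:chi_rs} become explicit operators on symmetric functions. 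The operator $\mathcal{C}$ is taken to be the $\fs_{t_+}$-image of the zero mode of a current $\mathcal{C}(z)$ built from $a(z)$ and $b(z)$ --- concretely the $N\to\infty$ limit of the Calogero--Sutherland-type operator $C^1_N(\gamma)$ discussed in the appendix --- normalised to be homogeneous of $L_0$-degree $0$ and to act lower-triangularly on the monomial basis.

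The first key step, Proposition \ref{prop:C10}, is to prove that $\fs_{t_+}(\chi^+_{r,s})$ is an eigenfunction of $\mathcal{C}$. Following the pattern of the OPE computations already carried out in the excerpt for $T(z)W_{t_+}(w)$ and $G(z)W_{t_+}(w)$, I would compute $\mathcal{C}(z)W_{t_+}(w)$ and check that, exactly when $t=t_+$ (equivalently $t^2-2\rho t=1$), its singular part is a total derivative $\partial_z(\cdots)$. This gives $[\mathcal{C},\oint_C dz\, W_{t_+}(z)]=0$, so $\mathcal{C}$ commutes with the screening operator defining $\chi^+_{r,s}$. Since the seed $\ket{\alpha^+_{r,s}-rt_+}$ is an eigenvector of $\mathcal{C}$ and $\mathcal{C}$ preserves the $L_0$-grading, the screened vector $\chi^+_{r,s}$ is again an eigenvector, with eigenvalue read off from its action on the seed.

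The second key step, Proposition \ref{prop:Uglov}, is to show that this eigenfunction property, together with the triangular normalisation, characterises the Uglov function. Here I would verify that $\mathcal{C}$ is lower-triangular in the dominance ordering, $\mathcal{C}\,m_\lambda=c_\lambda(\gamma)\,m_\lambda+\sum_{\mu<\lambda}(\ast)\,m_\mu$, and that the diagonal eigenvalues $c_\lambda(\gamma)$ are pairwise distinct among partitions of a fixed size. Distinctness makes $\mathcal{C}$ diagonalisable with a unique eigenfunction $u_\lambda=m_\lambda+\sum_{\mu<\lambda}(\ast)\,m_\mu$ for each $\lambda$, and by the defining properties in Definition \ref{dfn:Uglov} one identifies $u_\lambda=P^{(\gamma,2)}_\lambda$.

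Finally I would match the two. The image $\fs_{t_+}(\chi^+_{r,s})$ is homogeneous of degree $rs=|(r^s)|$, since each level-$n$ mode maps to degree $2n$ and $\chi^+_{r,s}$ sits at $L_0$-level $rs/2$; and one checks from the screened integral that its dominance-maximal monomial is $m_{(r^s)}$, with nonzero coefficient and with no strictly larger $m_\mu$ occurring (this is already visible in Example \ref{eg:chi:Lambda}). After rescaling, $\fs_{t_+}(\chi^+_{r,s})=m_{(r^s)}+\sum_{\mu<(r^s)}(\ast)\,m_\mu$ is an eigenfunction of $\mathcal{C}$, hence necessarily has eigenvalue $c_{(r^s)}(\gamma)$, and by the uniqueness of $u_{(r^s)}$ it equals $P^{(\gamma,2)}_{(r^s)}$ up to a scalar. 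I expect the principal obstacle to be Proposition \ref{prop:C10}: isolating the correct current $\mathcal{C}(z)$ and showing that $\mathcal{C}(z)W_{t_+}(w)$ is a total $z$-derivative precisely at $t=t_+$ is delicate, because the fermionic contributions of $b(z)$ must cancel against the bosonic ones only at that value of $t$. By comparison, the triangularity and the distinctness of the eigenvalues $c_\lambda(\gamma)$ in Proposition \ref{prop:Uglov}, together with the leading-term count, are comparatively routine once the ordering is handled with care.
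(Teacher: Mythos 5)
Your overall architecture coincides with the paper's: characterize $P^{(\gamma,2)}_{(r^s)}$ by dominance--triangularity together with the eigenfunction property for $C^1_0(\gamma)$ (this is Proposition \ref{prop:Uglov}), prove that $\fs_{t_+}(\chi^+_{r,s})$ is such an eigenfunction (Proposition \ref{prop:C10}), and use the screened integral \eqref{eq:chi_rs} to get the leading term $m_{(r^s)}$. However, your mechanism for the key step fails. You propose to prove the eigenfunction property by showing that $\mathcal{C}:=C^1_0(1/t_+^2)$, transported to the Fock modules by $\fs_{t_+}$, commutes with the screening charge $Q=\oint dz\,W_{t_+}(z)$, and then to ``read off the eigenvalue from its action on the seed.'' This commutation is false, and your own bookkeeping detects it. Since $\fs_{t_+}$ in Lemma \ref{lem:sf} sends every highest weight vector to $1$ and involves neither $a_0$ nor $\pi_0$, the seed $\ket{\alpha^+_{r,s}-rt_+}$ is an eigenvector of $\mathcal{C}$ with eigenvalue $C^1_0(\gamma)\,1=0$ (equivalently $\ve^1_{\emptyset}=0$). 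If $[\mathcal{C},Q]=0$ held, then $\mathcal{C}\,\chi^+_{r,s}=Q^r\,\mathcal{C}\ket{\alpha^+_{r,s}-rt_+}=0$, so the eigenvalue of every screened singular vector would be $0$. But already for $(r,s)=(1,1)$ one has $\fs_{t_+}(\chi^+_{1,1})\propto p_1$ and, by Lemma \ref{lem:C}(2), $C^1_0(\gamma)\,p_1=\ve^1_{(1)}(\gamma)\,p_1=2(\gamma-1)p_1\neq 0$ for $\gamma=t_+^{-2}\neq 1$. Hence $[\mathcal{C},Q]\neq 0$: no arrangement of the OPE $\mathcal{C}(z)W_{t_+}(w)$ can make it a total derivative. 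To salvage your route one would have to dress $\mathcal{C}(z)$ with momentum-dependent ($a_0$-dependent) counterterms, as happens for the bosonized Calogero--Sutherland Hamiltonian in the Virasoro--Jack case, and then separately compute the discrepancy between the dressed zero mode and $C^1_0(1/t_+^2)$ on $\calF(\alpha^+_{r,s})$; finding that dressing is precisely the hard content, and it is absent from your proposal.

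For comparison, the paper proves Proposition \ref{prop:C10} by a screening-free mechanism. Taking the $\hbar\to 0$ limit (with $q=-e^{\hbar}$, $t=-e^{\gamma\hbar}$) of the SKAO relation $\sum_{n\ge 0}\psi_{-n}T_n=\eta_0+(q/t)^{-1}q^{-2\alpha}$ for the deformed Virasoro algebra yields the decomposition $C^1_0(\gamma)=\sum_{n\ge 0}\bigl(\psi^1_{-n}(\gamma)T^0_n+\psi^0_{-n}T^1_n(\gamma)\bigr)$ of \eqref{eq:pt=C10}, and the crucial observation is that precisely at $\gamma=t_+^{-2}$ the positive modes $T^0_n$ and $T^1_n(t_+^{-2})$ ($n>0$) lie in the subalgebra $\langle G_{1/2},G_{3/2},\dots\rangle$ generated by the positive fermionic modes. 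These annihilate $\chi^+_{r,s}$ by the \emph{defining} property $G_k\chi^+_{r,s}=0$ ($k>0$) of a singular vector, so only the $n=0$ term survives, and the (nonzero) eigenvalue comes out of that zero-mode computation rather than being inherited from the seed. In the paper the screening realization \eqref{eq:chi_rs} is used only for the triangular expansion; the eigenfunction property holds for any singular vector, which is exactly what your commutation argument cannot reproduce.
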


In the next \S\ref{subsec:Uglov} we review the Uglov symmetric functions.

\subsection{Uglov symmetric functions}
\label{subsec:Uglov}

Let $q$ and $t$  be generic complex numbers
and denote by $P_\lambda(q,t) \in \Lambda_{\bbC}$ 
the monic Macdonald symmetric function  
for a partition $\lambda$
with two parameters $q,t$
(see Notation in \S\ref{sect:intro} for the detail).

\begin{dfn}\label{dfn:Uglov}
For $p \in \bbZ_{>0}$, $\gamma \in \bbC$ and a partition $\lambda$,
set
$$
 P^{(\gamma,p)}_{\lambda}  := 
 \lim_{q \to 1} P_\lambda(\omega_p q, \omega_p q^{\gamma})
$$
and call it the Uglov symmetric function.
Here $\omega_p := \exp(2\pi\sqrt{-1}/p)$ is the $p$-th root of unity.
\end{dfn}

\begin{rmk}
This family of symmetric functions was introduced by D.~Uglov in \cite{U1,U2} 
under the name of Jack($\fgl_p$) symmetric function.
For $p=1$ it reduces to the usual Jack symmetric function.
\end{rmk}

\begin{eg}
We list $P_{(r^s)}(\alpha) := P^{(1/\alpha,2)}_{(r^s)}$ with $r s \le 4$.
\begin{align*}
&P_{(1)}(\alpha) = p_1,\quad
 -(1+\alpha)P_{(2)}(\alpha) = -p_2-\alpha p_1^2,\quad
 -2 P_{(1,1)}(\alpha) = p_2-p_1^2,\\
&-(1+\alpha) P_{(3)}(\alpha) 
 =-\dfrac{2\alpha}{3}p_3
  -p_2 p_1
  -\dfrac{\alpha}{3} p_1^3,\quad 
 P_{(1^3)}(\alpha) 
 =\dfrac{1}{3}p_3-\dfrac{1}{2}p_2p_1+\dfrac{1}{6}p_1^3,\\
&(1+\alpha)(1+3\alpha) P_{(4)}(\alpha) 
 =2\alpha p_4
 +\dfrac{8 \alpha^2}{3} p_3 p_1
 +p_2^2
 +2 \alpha p_2 p_1^2
 +\dfrac{\alpha^2}{3}p_1^4,
\\
&2(1+\alpha) P_{(2^2)}(\alpha) 
 =(\alpha-1)p_4
 -\dfrac{4 \alpha}{3}p_3 p_1
 +p_2^2
 +\dfrac{\alpha}{3}p_1^4,
\\
&8 P_{(1^4)}(\alpha) 
=-2 p_4+\dfrac{8}{3} p_3 p_1 + p_2^2 -2 p_2 p_1^2 +\dfrac{1}{3} p_1^4.
\end{align*}
Here we multiplicated appropriate factors so that each right hand side 
coincides with the integral form $J^{(2,\alpha)}_\lambda$ 
introduced in \cite{BBT}
\footnote{\cite{BBT} seems to have a typo at 
$J^{(2,\alpha)}_{\lambda}$ with $\lambda=(2^2)$.}.
One can now check Conjecture \ref{cnj:main} 
in Example \ref{eg:chi:Lambda}.
\end{eg}

\subsection{Check the case $r=1$}

Conjecture \ref{cnj:main} in the case $r=1$ is easy to show.

\begin{lem}
For any $s \in \bbZ_{>0}$, we have 
$P_{(1^s)}^{(\gamma,2)}=e_{s}$,
which is the $s$-th elementary symmetric function.
\end{lem}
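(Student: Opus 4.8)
The plan is to prove $P_{(1^s)}^{(\gamma,2)} = e_s$ by exploiting the fact that the column partition $(1^s)$ is the conjugate of the row partition, combined with the well-known duality of Macdonald polynomials under the parameter swap $q \leftrightarrow t$ and the conjugation involution $\omega_{q,t}$ on $\Lambda$. First I would recall the classical fact \cite{M} that $P_{(1^s)}(q,t) = e_s$ \emph{for all} $(q,t)$: the column-shape Macdonald polynomial is simply the elementary symmetric function, independent of the parameters. This is because $m_{(1^s)} = e_s$ up to a scalar, there are no partitions strictly dominated by $(1^s)$ (the column shape is minimal in dominance order among partitions of $s$), so the triangular expansion $P_{(1^s)}(q,t) = m_{(1^s)} + \sum_{\mu < (1^s)} c_{\lambda,\mu} m_\mu$ collapses to a single term, giving $P_{(1^s)}(q,t) = m_{(1^s)} = e_s$.

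With that identity in hand, the remaining steps are immediate. By Definition \ref{dfn:Uglov},
$$
 P_{(1^s)}^{(\gamma,2)} = \lim_{q \to 1} P_{(1^s)}(\omega_2 q, \omega_2 q^{\gamma}),
$$
and since $P_{(1^s)}(\omega_2 q, \omega_2 q^{\gamma}) = e_s$ holds identically in the two arguments, the limit is trivially $e_s$. Thus the specialization to $p=2$, the root of unity $\omega_2 = -1$, and the limit $q \to 1$ all act on an expression that does not depend on the parameters at all, and the claim follows.

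The only point that requires care — and what I expect to be the main (if modest) obstacle — is justifying the interchange of the limit $q \to 1$ with the parameter specialization, i.e.\ ensuring that the Macdonald polynomial $P_{(1^s)}(\omega_2 q, \omega_2 q^{\gamma})$ is genuinely well-defined along the specialization path and that no spurious poles appear at the root of unity. For the column shape this is automatic because $P_{(1^s)}$ has no denominators in $(q,t)$: it equals $e_s$ as an element of $\Lambda_{\bbZ}$, so the potentially dangerous specialization issues inherent in the Uglov limit (which for general $\lambda$ force one through a careful analysis of principal specializations and normalizations) simply do not arise here. This is precisely why the case $r=1$, equivalently the column shapes $(1^s)$, is the easy end of Conjecture \ref{cnj:main}.
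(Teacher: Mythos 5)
Your proposal is correct and takes essentially the same route as the paper: the paper's proof is just the one-line observation that $P_{(1^s)}(q,t)=e_s$ holds identically at the Macdonald level (citing Macdonald, Chap.~VI, \S4, (4.8)), after which the limit defining $P^{(\gamma,2)}_{(1^s)}$ is trivial. Your dominance-minimality argument for $(1^s)$ and the remark that no specialization issues arise are valid elaborations of that same argument rather than a different approach.
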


\begin{proof}
At the Macdonald level we already have $P_{(1^s)}(q,t)=e_s$ 
\cite[Chap.\ VI, \S4, (4.8)]{M}.
\end{proof}

In this subsection let us suppress the symbol $\ep = -$ and 
denote $t := t_{-}$ and $\chi_{r,s} := \chi^{-}_{r,s}$ 
and so on for simplicity.
We have $\alpha_{r,s}=(r+1)t/2-(s+1)/2 t$.
In the case $r=1$ and $s$ odd,
the expression \eqref{eq:chi_rs} of the singular vector 
is mapped under $\fs_t$ to the symmetric function
\begin{align*}
\chi_{1,s}
&= \oint_C dz b(z) 
 \exp\left(2t \sum_{n\in \bbZ_{>0}}\dfrac{a_{-2n}}{2n}z^n\right) 
 e^{t \pi_0} z^{t a_0} \ket{\alpha_{1,s}-t} \\
&= \oint_C dz b(z) 
 \exp\left(2t \sum_{n\in \bbZ_{>0}}\dfrac{a_{-2n}}{2n}z^n\right) 
 z^{-(s+1)/2} \ket{\alpha_{1,s}} \\
&\longmapsto
 \dfrac{t}{2} \oint_C dz 
  \left(
  E_1(-z^{1/2})-E_1(z^{1/2})
  \right) 
  E_{0}(-z^{1/2})
  z^{-1-s/2}. 
\end{align*}
with
$$
 E_0(z) := 
 \exp\left(-\sum_{n \in \bbZ_{>0}} \dfrac{p_{2n}}{2n}z^{2n}\right),\quad 
 E_1(z) := 
 \exp\left(\sum_{n \in \bbZ_{>0}} \dfrac{p_{2n-1}}{2n-1}z^{2n-1}\right).
$$
By \cite[Chap.\ I, \S 2, (2.10), (2.14)]{M}
we have 
\begin{align*}
E_0(z) E_1(z) 
=\exp\left(-\sum_{n \in \bbZ_{>0}} \dfrac{p_{n}}{n}(-z)^n\right) 
= \sum_{m \in \bbZ_{\ge0}} z^m e_m.
\end{align*}
Therefore 
$$
 \left( E_1(-z^{1/2})-E_1(z^{1/2}) \right) 
 E_0(-z^{1/2})
=  \sum_{m \in \bbZ_{\ge0}} (-z^{1/2})^m e_m
  -\sum_{m \in \bbZ_{\ge0}} z^{m/2} e_m
=-2\sum_{n \in \bbZ_{>0}} z^{n-1/2} e_{2n-1}.
$$
Thus taking $C := \dfrac{1}{2\pi\sqrt{-1}}C_0$ 
with $C_0$ a closed circle around the origin $0 \in \bbC$,
the integration gives 
\begin{align*}
\fs_t(\chi_{1,s}) = -t e_{s}.
\end{align*}

\section{The proof}
\label{sect:main}

\subsection{Lemmas for Uglov symmetric functions}

\begin{lem}\label{lem:C}
\begin{enumerate}
\item 
Define
$$
C^0(z) :=
 \exp\left( 2\sum_{n>0} \dfrac{p_{2n-1}}{2n-1} z^{2n-1}\right)
 \exp\left(-2\sum_{n>0} \dfrac{\partial}{\partial p_{2n-1}} z^{-2n+1}\right)
$$
and set $C^0(z) = \sum_{n \in \bbZ} C^0_n z^{-n}$.
Then 
the Uglov symmetric function $P_\lambda^{(\gamma,2)}$ 
is an eigenfunction 
of the operator $C^0_0$. 
More precisely speaking, the following holds.
$$
 C^0_0 P_\lambda^{(\gamma,2)} = P_\lambda^{(\gamma,2)} \ve^0_\lambda,\quad
 \ve^0_\lambda := 1-2\sum_{i=1}^{\ell(\lambda)} (-1)^i \left((-1)^{\lambda_i}-1\right). 
$$

\item
Define
\begin{align}\label{eq:C10}
\begin{split}
C^1(\gamma;z) :=
&-\gamma \left(\sum_{n>0} p_{2n-1}z^{2n-1} \right)C^0(z) \\
&+C^0(z)
 \left( -\sum_{n>0}(2n-1) \dfrac{\partial}{\partial p_{2n-1}} z^{-2n+1}
        +\sum_{n>0} 2n  \dfrac{\partial}{\partial p_{2n}} z^{-2n}
        +\gamma \sum_{n>0} p_{2n} z^{2n} \right)
\end{split}
\end{align}
and set $C^1(\gamma;z) = \sum_{n \in \bbZ} C^1_n(\gamma) z^{-n}$.
Then 
$P_\lambda^{(\gamma,2)}$ 
is an eigenfunction of the operator $C^1_0(\gamma)$: 
$$
 C^1_0(\gamma) P_\lambda^{(\gamma,2)} 
  = P_\lambda^{(\gamma,2)} \ve^1_\lambda(\gamma),\quad
 \ve^1_\lambda(\gamma) := 
  -\sum_{i=1}^{\ell(\lambda)} (-1)^i \left\{
      2 (-1)^{\lambda_i} \lambda_i 
  + \gamma (1-2i) \left((-1)^{\lambda_i}-1\right) \right\}.
$$
\end{enumerate}
\end{lem}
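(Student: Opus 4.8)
The plan is to reduce everything to Macdonald theory through the defining limit $P^{(\gamma,2)}_\lambda = \lim_{q\to 1}P_\lambda(\omega_2 q, \omega_2 q^\gamma)$ of Definition \ref{dfn:Uglov}, where $\omega_2 = -1$. At the Macdonald level $P_\lambda(q,t)$ is a joint eigenfunction of the commuting Macdonald difference operators, and in the free-field realization these are written as modes of a vertex-operator-valued current whose creation part carries coefficients of the shape $\tfrac{1-t^m}{m}\,p_m$ and whose annihilation part carries $(1-t^{-m})\tfrac{\partial}{\partial p_m}$, with eigenvalues that are explicit symmetric functions of $\{q^{\lambda_i}t^{-i}\}$. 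I would isolate the mode that is diagonalized by $P_\lambda(q,t)$, perform the Uglov specialization $q\mapsto -q$, $t\mapsto -q^\gamma$, and take $q\to 1$. Since $P_\lambda(-q,-q^\gamma)\to P^{(\gamma,2)}_\lambda$, the limit of each exact eigenvalue equation yields the assertion, provided operator and eigenvalue both converge.

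For part (1) this is clean. Under $t=-q^\gamma\to -1$ the factors $\tfrac{1-t^m}{m}$ and $1-t^{-m}$ vanish for even $m$ and equal $\tfrac{2}{m}$ and $2$ for odd $m$, so in the limit only odd power sums survive and the current collapses to $\exp\!\big(2\sum_{n>0}\tfrac{p_{2n-1}}{2n-1}z^{2n-1}\big)\exp\!\big(-2\sum_{n>0}\tfrac{\partial}{\partial p_{2n-1}}z^{-2n+1}\big)=C^0(z)$. On the eigenvalue side one computes $q^{\lambda_i}t^{-i}\mapsto (-1)^{\lambda_i+i}q^{\lambda_i-\gamma i}\to (-1)^{\lambda_i+i}$, and collecting the contributions reproduces $\ve^0_\lambda = 1 - 2\sum_i(-1)^i\big((-1)^{\lambda_i}-1\big)$ after fixing the normalization of the mode.

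Part (2) is where the real work lies. One notices first that $\ve^1_\lambda(\gamma)$ is, up to normalization, the $\tfrac{d}{d\log q}$-derivative at $q=1$ of the specialized first eigenvalue (the term $2(-1)^{\lambda_i}\lambda_i$ comes from differentiating $(-q)^{\lambda_i}$ and the $\gamma(1-2i)$ term from differentiating $t^{-i}=(-q^\gamma)^{-i}$), which I would confirm on small $\lambda$. One cannot, however, simply differentiate the single eigenvalue equation: the first-order variation $\tfrac{d}{d\log q}P_\lambda(-q,-q^\gamma)|_{q=1}$ does not lie in the $\ve^0_\lambda$-eigenspace of $C^0_0$ (for $\lambda=(3)$ the coefficient on $m_{(2,1)}$ is genuinely $q$-dependent while $\ve^0_{(2,1)}\neq\ve^0_{(3)}$), so a cross term survives. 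The correct route is to realize $C^1(\gamma;z)$ of \eqref{eq:C10} as the $q\to 1$ limit of an operator diagonalized exactly for every $q$ — a suitable $q$-dependent combination of the first two commuting Macdonald Hamiltonians (equivalently two modes of the free-field current) chosen so that the two-term structure of \eqref{eq:C10}, with its even power-sum insertions $p_{2n}$ and $\tfrac{\partial}{\partial p_{2n}}$, emerges from the subleading part of the specialization. Then $P^{(\gamma,2)}_\lambda$ is an exact eigenfunction and its eigenvalue is the limit of the corresponding combination of Macdonald eigenvalues, which equals $\ve^1_\lambda(\gamma)$ by the same substitution expanded to first order.

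The main obstacle is therefore twofold: pinning down the precise $q$-deformed, exactly-diagonalized operator whose limit is literally \eqref{eq:C10} (the even-sector terms must be produced cleanly, and the multiplier $-\gamma\sum_{n>0}p_{2n-1}z^{2n-1}$ must come out with the right sign from the $t=-q^\gamma$ creation part), and controlling the $q\to1$ asymptotics so that the a priori singular pieces cancel and the limit is finite. I would handle the bookkeeping mode-by-mode in the variable $z$, writing $q=e^{\hbar}$ and expanding to first order in $\hbar$, and I would validate both the limiting operator and the eigenvalue against $\lambda=(1),(2),(1,1),(3)$ before asserting the general formula.
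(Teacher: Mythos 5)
Your treatment of part (1) is the same as the paper's: specialize the SKAO relation $\eta_0 P_\lambda(q,t)=\ve_\lambda(q,t)P_\lambda(q,t)$ at $q=-e^{\hbar}$, $t=-e^{\gamma\hbar}$ and read off the $\hbar^{0}$ coefficient. The trouble is part (2), where your reason for declaring the direct $\hbar$-expansion invalid rests on a false computation. The coefficient of $m_{(2,1)}$ in $P_{(3)}(q,t)$ is $\dfrac{(1-t)(1-q^{3})}{(1-q)(1-q^{2}t)}$, and under the substitution it equals
\begin{equation*}
\frac{(1+e^{\gamma\hbar})(1+e^{3\hbar})}{(1+e^{\hbar})(1+e^{(2+\gamma)\hbar})}
=1+\frac{1-\gamma}{2}\,\hbar^{2}+O(\hbar^{4}),
\end{equation*}
so it is indeed $\hbar$-dependent, but its \emph{first} derivative at $\hbar=0$ vanishes --- and that is all the direct argument needs. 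This is no accident and holds for every $\lambda$ and every coefficient: since $P_\lambda(q^{-1},t^{-1})=P_\lambda(q,t)$ (a standard Macdonald symmetry) and $\hbar\mapsto-\hbar$ implements $(q,t)\mapsto(q^{-1},t^{-1})$ when $q=-e^{\hbar}$, $t=-e^{\gamma\hbar}$, the specialized family $P_\lambda(-e^{\hbar},-e^{\gamma\hbar})$ is an \emph{even} function of $\hbar$. Hence $P^{1}_\lambda:=\frac{d}{d\hbar}P_\lambda\big|_{\hbar=0}=0$, the cross term $(C^{0}_{0}-\ve^{0}_\lambda)P^{1}_\lambda$ that you worry about is identically zero, and comparing $\hbar^{1}$-coefficients in the exact eigenvalue equation directly yields $C^{1}_{0}(\gamma)P^{(\gamma,2)}_\lambda=\ve^{1}_\lambda(\gamma)P^{(\gamma,2)}_\lambda$. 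This direct comparison is precisely the paper's proof; your objection does pinpoint the one step the paper leaves implicit (why no cross term survives despite the degeneracy of $C^{0}_{0}$), but the fix is this symmetry observation, not a new operator.

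Having discarded the workable route, what you offer instead is not a proof: the entire content of your part (2) is the phrase ``a suitable $q$-dependent combination of the first two commuting Macdonald Hamiltonians, chosen so that the two-term structure of \eqref{eq:C10} emerges.'' You never write this combination down, never verify that one exists whose $\hbar^{0}$ part cancels identically (this requires the leading parts of the two Hamiltonians to be linearly dependent modulo scalars, which is itself a degeneracy statement needing proof), never show that its $\hbar^{1}$ part is literally \eqref{eq:C10} including the $-\gamma\sum_{n>0}p_{2n-1}z^{2n-1}$ multiplier and the even-sector terms, and never compute the limiting eigenvalue. Spot checks on $\lambda=(1),(2),(1,1),(3)$ cannot substitute for this construction. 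As it stands, your part (1) is correct and essentially the paper's argument, while your part (2) is an unexecuted plan adopted because of an incorrect premise.
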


\begin{proof}
We recall the result in \cite{SKAO} for the bosonization of the 
Macdonald difference operators and symmetric functions.
Put
\begin{align}\label{eq:eta}
\eta(z) := 
\exp\left(\sum_{n>0}(1-t^n)\dfrac{p_n}{n}z^n\right)
\exp\left(-\sum_{n>0}(1-q^n)\dfrac{\partial}{\partial p_n}z^{-n}\right)
\end{align}
and set $\eta(z) = \sum_{n \in \bbZ} \eta_n z^{-n}$.
Then the Macdonald symmetric function $P_\lambda(q,t) \in \Lambda_{\bbF}$ 
(see Notation in \S \ref{sect:intro} for the detail)
is the eigenfunction of the operator $\eta_0$,
and satisfies 
\begin{align*}
 \eta_0 P_\lambda(q,t) = P_\lambda(q,t) \ve_\lambda(q,t),\quad
 \ve_\lambda(q,t) := 1+ (t-1) \sum_{i=1}^{\ell(\lambda)} (q^{\lambda_i}-1)t^{-i}.
\end{align*}
Now take the limit $\hbar \to 0$ with $q=-e^{\hbar}$, $t=-e^{\gamma \hbar}$.
Then $P_\lambda(q,t) \to P^{(\gamma,2)}_\lambda$ and 
\begin{align}\label{eq:eta-hbar}
 \eta(z) = C^0(z) + \hbar C^1(\gamma;z) + O(\hbar^2),\quad
 \ve_\lambda(q,t) = \ve^0_\lambda + \hbar \ve^1_\lambda(\gamma) + O(\hbar^2),
\end{align}
where $C^0(z)$, $C^1(\gamma;z)$, $\ve^0_\lambda$ and $\ve^1_\lambda(\gamma)$ 
are given in the statement.
Now by the comparison of  the coefficients of $\hbar^0$ and $\hbar^1$,
the result follows immediately.
\end{proof}

\begin{prop}\label{prop:Uglov}
The Uglov symmetric function is characterized by the following properties.
\begin{itemize}
\item 
It has the expansion 
$P^{(\gamma,2)}_\lambda  = 
 m_{\lambda} + \sum_{\mu < \lambda} m_{\lambda,\mu} s_\mu$
with respect to the dominance ordering $<$ among partitions,
where 
$c_{\lambda,\mu} \in \bbC$. 

\item
It is the eigenfunction of the operator $C^1_0(\gamma)$ 
given in \eqref{eq:C10}. 
\end{itemize}
\end{prop}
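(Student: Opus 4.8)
The plan is to read the statement as a \emph{uniqueness} assertion. That $P_\lambda^{(\gamma,2)}$ possesses the two listed properties is already essentially in hand: the monomial--triangular expansion $P_\lambda^{(\gamma,2)} = m_\lambda + \sum_{\mu<\lambda} c_{\lambda,\mu} m_\mu$ is inherited from the corresponding expansion of $P_\lambda(q,t)$ by passing to the Uglov limit of Definition~\ref{dfn:Uglov} (the leading coefficient staying $1$ and the surviving coefficients landing in $\bbC$), while the eigenfunction property with respect to $C^1_0(\gamma)$ is exactly Lemma~\ref{lem:C}(2). So the whole content is to show that these two properties pin the function down. As a preliminary I would record that $C^1_0(\gamma)$ preserves each homogeneous component $\Lambda_\bbC^{(n)}$, which is finite dimensional with basis $\{m_\lambda \mid |\lambda|=n\}$: in the current $C^1(\gamma;z)$ of \eqref{eq:C10} every symbol $p_k$ carries the weight $z^{k}$ and every $\partial/\partial p_k$ the weight $z^{-k}$, so each monomial shifts the degree by precisely its $z$-exponent, and the coefficient of $z^0$ is therefore degree preserving.

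The next step is to establish that $C^1_0(\gamma)$ is triangular in the monomial basis with respect to the dominance ordering, with diagonal entry $\ve^1_\lambda(\gamma)$:
\begin{align*}
 C^1_0(\gamma)\, m_\lambda = \ve^1_\lambda(\gamma)\, m_\lambda + \sum_{\mu<\lambda} (\ast)\, m_\mu .
\end{align*}
Rather than compute the action of the vertex operator $C^0(z)$ on monomials by hand, I would inherit this from the Macdonald side. The operator $\eta_0$ of \eqref{eq:eta} is triangular in the monomial basis with diagonal $\ve_\lambda(q,t)$ for generic $q,t$ (the bosonic form of the classical triangularity of the Macdonald operator). Specialising $q=-e^{\hbar}$, $t=-e^{\gamma\hbar}$ and invoking \eqref{eq:eta-hbar}, and using that the monomial basis does not depend on $\hbar$, the coefficient of $\hbar^1$ is again triangular, with diagonal equal to the $\hbar^1$-coefficient of $\ve_\lambda(q,t)$, namely $\ve^1_\lambda(\gamma)$. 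This yields the triangularity of $C^1_0(\gamma)$ with the asserted diagonal at once.

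The heart of the argument, and the step I expect to be the main obstacle, is to show that the eigenvalues $\ve^1_\lambda(\gamma)$ are pairwise distinct among partitions of each fixed size $n$, for generic $\gamma$. This is genuinely needed, and it is exactly why the proposition singles out $C^1_0(\gamma)$ rather than $C^0_0$: the order-zero eigenvalues $\ve^0_\lambda$ depend only on the parities and positions of the parts and are highly degenerate (for instance $\ve^0_{(2)}=\ve^0_{(1,1)}=1$), so the degeneracy is broken only at first order in $\hbar$. From the explicit formula in Lemma~\ref{lem:C}(2), $\ve^1_\lambda(\gamma)$ is affine-linear in $\gamma$, say $\ve^1_\lambda(\gamma)=A_\lambda+\gamma B_\lambda$ with $A_\lambda = -2\sum_i (-1)^{i+\lambda_i}\lambda_i$ and $B_\lambda = \sum_i (-1)^i(2i-1)\big((-1)^{\lambda_i}-1\big)$; two such functions agree for all but finitely many $\gamma$ only if $(A_\lambda,B_\lambda)=(A_\mu,B_\mu)$. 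I would therefore prove that the pair $(A_\lambda,B_\lambda)$ is injective on partitions of size $n$ (the cases $n\le 3$ already confirm it), and then check that the relevant value $\gamma=1/t_+^2$ avoids the finitely many exceptional values for each $n$.

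Granting distinctness, the usual recursion finishes the proof. A degree-preserving, dominance-triangular operator with pairwise distinct diagonal entries has, for each $\lambda$, a unique eigenvector of the form $m_\lambda + \sum_{\mu<\lambda}(\ast) m_\mu$: the off-diagonal coefficients are determined one at a time, in a linear extension of the dominance order, by dividing through the nonzero differences $\ve^1_\lambda(\gamma)-\ve^1_\mu(\gamma)$. Since $P_\lambda^{(\gamma,2)}$ is such an eigenvector, it is the unique one, which is precisely the asserted characterization.
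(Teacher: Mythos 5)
Your reduction of the proposition to (i) dominance--triangularity of $C^1_0(\gamma)$ in the monomial basis and (ii) pairwise distinctness of the eigenvalues $\ve^1_\lambda(\gamma)$ within each degree is the right skeleton, and part (i), inherited from the Macdonald side via the $\hbar$-expansion, is sound. The gap is that (ii) --- which you correctly identify as the heart of the matter and propose to prove --- is \emph{false}. In your notation $\ve^1_\lambda(\gamma)=A_\lambda+\gamma B_\lambda$, take $\lambda=(5)$ and $\mu=(3,2)$:
\begin{align*}
A_{(5)} = -2(-1)^{1+5}\cdot 5 = -10, &\qquad B_{(5)} = (-1)^1(1)\bigl((-1)^5-1\bigr) = 2,\\
A_{(3,2)} = -2\bigl[(-1)^{1+3}\cdot 3+(-1)^{2+2}\cdot 2\bigr] = -10, &\qquad B_{(3,2)} = (-1)^1(1)\bigl((-1)^3-1\bigr)+0 = 2,
\end{align*}
so $\ve^1_{(5)}(\gamma)=\ve^1_{(3,2)}(\gamma)=2\gamma-10$ identically in $\gamma$; one checks likewise $\ve^0_{(5)}=\ve^0_{(3,2)}=-3$, so the zeroth-order operator cannot break the tie either. (Your injectivity claim does hold for $n\le 4$, but fails at $n=5$; another comparable degenerate pair is $(2,2,1)$ and $(1^5)$.) Crucially $(3,2)<(5)$ in the dominance order, which is exactly the configuration your recursion cannot tolerate: the coefficient of $m_{(3,2)}$ would have to be obtained by dividing by $\ve^1_{(5)}(\gamma)-\ve^1_{(3,2)}(\gamma)=0$. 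And since the two eigenvalues agree as polynomials in $\gamma$, no genericity assumption on $\gamma$, nor the final check you propose at $\gamma=1/t_+^2$, can rescue the step.

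Moreover this is not a defect that a cleverer proof of distinctness could repair: by Lemma \ref{lem:C}(2) both $P^{(\gamma,2)}_{(5)}$ and $P^{(\gamma,2)}_{(3,2)}$ are eigenfunctions of $C^1_0(\gamma)$ with the same eigenvalue, so every combination $P^{(\gamma,2)}_{(5)}+c\,P^{(\gamma,2)}_{(3,2)}$, $c\in\bbC$, satisfies both bullet points of the proposition for $\lambda=(5)$. Hence the two listed properties alone do not determine the function at $\lambda=(5)$, and any proof that reads the proposition as a pure simple-spectrum uniqueness statement must fail there. The degeneracy is structural: under $q=-e^{\hbar}$, $t=-e^{\gamma\hbar}$ the Macdonald eigenvalues $\ve_\lambda(q,t)$ of $(5)$ and $(3,2)$ agree through order $\hbar^1$ and separate only at order $\hbar^2$, so pinning down $P^{(\gamma,2)}_\lambda$ requires extra input --- higher coefficients of the $\hbar$-expansion of $\eta_0$ (further commuting Hamiltonians), or the orthogonality with respect to the $(q,t)$ inner product that underlies the Macdonald characterization to which the paper's own (one-line) proof appeals. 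Note finally that the problematic partition is not exotic for the intended application: $(5)=(5^1)$ is a rectangle with $r\equiv s\pmod 2$, so the same degeneracy must be confronted in the proof of Conjecture \ref{cnj:main} for $\chi^+_{5,1}$.
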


\begin{proof}
This claim follows from the characterization of the 
Macdonald symmetric functions given in Notation in \S \ref{sect:intro}.
\end{proof}

\subsection{The main part of the proof}

Recall our isomorphism $\fs_t: \calF(\alpha) \to \Lambda_\bbC$ 
given in Lemma \ref{lem:sf}.
Using the same notation $\fs_t$ for $\End(\calF(\alpha)) \to \End(\Lambda_\bbC)$,
we have
\begin{align}\label{eq:fst-rec}
\fs_t\left( b(z^2) \right) 
= \dfrac{1}{2\sqrt{2}}
 \left(e^{\varphi_-(z)} e^{2\varphi_+(z)} - e^{-\varphi_-(z)} e^{-2\varphi_+(z)} \right),
\qquad
\fs_t\left( a(z^2)z \right)
= -\dfrac{1}{2t} \psi_{-}(z) -t \psi_{+}(z)
\end{align}
with
\begin{align*}
&\varphi_{+}(z) := 
   \sum_{n \in \bbZ_{>0}}\dfrac{\partial}{\partial p_{2n-1}}z^{-2n+1},\quad
 \varphi_{-}(z) := 
  -\sum_{n \in \bbZ_{>0}}\dfrac{p_{2n-1}}{2n-1}z^{2n-1},\\
&\psi_{+}(z) := 
  \sum_{n>0} 2n \dfrac{\partial}{\partial p_{2 n}} z^{-2n-1},\quad  
 \psi_{-}(z) := 
  \sum_{n>0} p_{2 n} z^{2n-1}.
\end{align*}

\begin{prop}\label{prop:C10}
The image $v_{r,s} := \fs_{t_+}(\chi_{r,s}^+) $ 
of the singular vector of the $\SVir$ module $\calF(\alpha_{r,s}^+)$
satisfies
$$
 C^1_0(t_+^{-2}) v_{r,s} = v_{r,s} \ve^1_{(r^s)}(t_+{-2}).
$$
\end{prop}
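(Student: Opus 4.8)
The plan is to show that $v_{r,s}=\fs_{t_+}(\chi^{+}_{r,s})$ is a genuine eigenvector of $C^1_0(\gamma)$ with $\gamma=t_+^{-2}$, and then to read off its eigenvalue, rather than to attack the eigenvalue equation head-on. First I would recall from Lemma \ref{lem:C} that $C^1_0(\gamma)$ is the coefficient of $\hbar$ in the Macdonald zero mode $\eta_0$ under the specialization $q=-e^{\hbar}$, $t=-e^{\gamma\hbar}$; since $\eta_0$ is triangular in the monomial basis $\{m_\lambda\}$, so is $C^1_0(\gamma)$, with diagonal entry $\ve^1_\lambda(\gamma)$ on $m_\lambda$. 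Hence, once I know that $v_{r,s}$ is an eigenvector whose dominant monomial is $m_{(r^s)}$, the eigenvalue is forced to equal $\ve^1_{(r^s)}(\gamma)$, which is exactly the claim. This is also the point of contact with Proposition \ref{prop:Uglov}.

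The heart of the argument is to show that $C^1_0(\gamma)$ preserves the (image under $\fs_{t_+}$ of the) space of $\SVir$-singular vectors inside the relevant graded piece of $\calF(\alpha^{+}_{r,s})$. Using \eqref{eq:fst-rec} I would first rewrite the current $C^1(\gamma;z)$ of \eqref{eq:C10} as the image under $\fs_{t_+}$ of an explicit Fock-space operator built from the free fields $a(z)$ and $b(z)$: its $p_{2n}$-part should match the bosonic current $\fs_{t_+}(a(z^2)z)$, while its $p_{2n-1}$-part, dressed by $C^0(z)$, should match the fermionic vertex coming from $b(z^2)$. The parameter identifications $t_+^2-2\rho t_+=1$ and $\gamma=t_+^{-2}$ are precisely what is needed to align these coefficients.

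Next I would compute the operator product of this current with the screening current $W_{t_+}(w)$ and show that it is a total $w$-derivative, exactly in the spirit of the earlier computations giving $T(z)W_{t_\pm}(w)\sim\partial(\cdots)$ and $G(z)W_{t_\pm}(w)\sim\partial(\cdots)$. This is the super/root-of-unity analogue of the screening-current commutativity of \cite{SKAO}, and after integration by parts in the contours it would yield $[\,C^1_0(\gamma),\ \oint_C dz_1\cdots dz_r\,W_{t_+}(z_1)\cdots W_{t_+}(z_r)\,]=0$. Applying this to the defining expression \eqref{eq:chi_rs} for $\chi^{+}_{r,s}$ shows that $C^1_0(\gamma)v_{r,s}$ is again the image of a singular vector of $\calF(\alpha^{+}_{r,s})$ of the same $L_0$-degree. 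By the Kac determinant (Fact \ref{fct:Kac}) this singular-vector space is one-dimensional at $h=h^{\NS}_{r,s}$ and level $rs/2$, so $v_{r,s}$ must be an eigenvector; combined with the triangularity recalled above and the identification of its dominant monomial as $m_{(r^s)}$ (which follows from the leading behaviour of the screening integral, as already seen for $r=1$), the eigenvalue is $\ve^1_{(r^s)}(\gamma)$.

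The main obstacle is the operator-product computation of the second and third steps: establishing that the fermionic-vertex contractions arising from $b(z^2)$, together with the bosonic terms, assemble into a total derivative against $W_{t_+}(w)$, and correctly tracking the zero-mode factor $z^{t_+ a_0}$ that ultimately produces the scalar. Controlling the nested contours and the boundary terms of the integration by parts, and verifying that the surviving non-derivative contribution is exactly the scalar $\ve^1_{(r^s)}(\gamma)$ and not merely another singular vector, is where the constraints $t_+^2-2\rho t_+=1$ and $\gamma=t_+^{-2}$ must be used in full. Should the singular-vector space fail to be one-dimensional at some degenerate $(r,s)$, the eigenvalue can instead be extracted directly from the residues in the $z$-contour and matched to $\ve^1_{(r^s)}(\gamma)$ term by term.
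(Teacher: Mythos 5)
Your first and last steps are sound: $C^1_0(\gamma)$ is triangular in the monomial basis with diagonal entries $\ve^1_\lambda(\gamma)$, so once $v_{r,s}$ is known to be an eigenvector whose leading monomial is $m_{(r^s)}$, the eigenvalue is forced. The gap is in your central step: the claimed commutation $\bigl[\,C^1_0(\gamma),\,\oint_C dz_1\cdots dz_r\,W_{t_+}(z_1)\cdots W_{t_+}(z_r)\,\bigr]=0$ is false, and no contour bookkeeping or use of $t_+^2-2\rho t_+=1$ can repair it. Indeed, the screening charge $Q$ intertwines $\calF(\alpha^+_{r,s}-rt_+)\to\calF(\alpha^+_{r,s})$, the map $\fs_{t_+}$ sends both highest weight vectors to $1\in\Lambda_{\bbC}$, and $C^1_0(\gamma)\cdot 1=\ve^1_{\emptyset}(\gamma)\cdot 1=0$. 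Hence your commutation, applied to the expression \eqref{eq:chi_rs}, would give $C^1_0(\gamma)\,v_{r,s}=\fs_{t_+}\bigl(Q\,\fs_{t_+}^{-1}C^1_0(\gamma)\,\fs_{t_+}\ket{\alpha^+_{r,s}-rt_+}\bigr)=0$, i.e.\ eigenvalue zero. But for $r,s$ both odd one computes from Lemma \ref{lem:C} that $\ve^1_{(r^s)}(t_+^{-2})=2\bigl(s\,t_+^{-2}-r\bigr)$; for instance $(r,s)=(1,1)$ gives $C^1_0(\gamma)p_1=2(\gamma-1)p_1$, nonzero for generic $t_+$. The conceptual error is that only the bosonized $\SVir$ currents $T(z)$ and $G(z)$ have total-derivative OPE with $W_{t_\pm}(w)$; the current $C^1(\gamma;z)$ is not among them. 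It comes from $\eta(z)$, which equals the deformed Virasoro current of \cite{SKAO} only after dressing by the vertex operator $\psi(z)$, and this dressing destroys the total-derivative structure. (Already at the Macdonald level $\eta_0$ does not commute with screening charges: if it did, every $P_{(r^s)}(q,t)$ would have $\eta_0$-eigenvalue $\ve_\emptyset=1$, which is false.)

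The mechanism that actually works, and that the paper uses, is annihilation rather than commutation. Expanding the zero-mode identity \eqref{eq:pt-eta} of \cite{SKAO} at $q=-e^{\hbar}$, $t=-e^{\gamma\hbar}$ yields the decomposition \eqref{eq:pt=C10},
\begin{equation*}
C^1_0(\gamma)=\sum_{n\ge0}\left(\psi^1_{-n}(\gamma)\,T^0_n+\psi^0_{-n}\,T^1_n(\gamma)\right),
\end{equation*}
and the crucial observation is that precisely at $\gamma=t_+^{-2}$ the positive modes $T^0_n$ and $T^1_n(t_+^{-2})$, $n>0$, lie in the subalgebra $\langle G_{1/2},G_{3/2},\dots\rangle\subset U(\SVir)$, because $T^1(t_+^{-2};z)$ is built from $\sqrt{2}\,z\,\fs_{t_+}\bigl(\ff(G(z^2))\bigr)$ together with terms whose positive modes also lie in that subalgebra (this is \eqref{eq:T1G} and \eqref{eq:T0G}). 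These modes kill $\chi^+_{r,s}$ by the defining property $G_k\chi^+_{r,s}=0$ of a singular vector, so only the zero-mode terms of the decomposition survive, and they produce the nonzero eigenvalue. If you replace your screening-commutation step by this decomposition-plus-annihilation argument, the rest of your outline (triangularity and the identification of the leading monomial $m_{(r^s)}$ from the screening integral) does go through.
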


\begin{proof}
We deduce the result from the deformed Virasoro algebra case given in \cite{SKAO}.
Recall that the deformed Virasoro algebra is the associative algebra 
with two parameters $q,t \in \bbC$ generated by $T_n$'s ($n\in \bbZ$) over $\bbC$. 
It has the following bosonization map (injective homomorphism from the algebra 
to the completed enveloping algebra of the Heisenberg algebra):
\begin{align*}
\sum_{n \in \bbZ} T_n z^{-n} &=
\exp\left(
 -\sum_{n>0} \dfrac{1-t^n}{1+(q/t)^n}\dfrac{p_n}{n} \dfrac{z^n}{(q t)^{n/2}}\right)
\exp\left(
 -\sum_{n>0} (1-q^n)\dfrac{\partial}{\partial p_n} \dfrac{z^n}{(q/t)^{n/2}}\right) 
(q/t)^{1/2}q^{\alpha} \\
&+
\exp\left(
 \sum_{n>0} \dfrac{1-t^n}{1+(q/t)^n}\dfrac{p_n}{n} \dfrac{z^n}{(t^3/q)^{n/2}}\right)
\exp\left(
 \sum_{n>0} (1-q^n)\dfrac{\partial}{\partial p_n} \dfrac{z^n}{(t/q)^{n/2}}\right) 
(q/t)^{-1/2}q^{-\alpha}.
\end{align*}
Here the generators $T_n$ are considered as the operators acting on 
the space $\Lambda_\bbF$ of symmetric functions with coefficients with 
$\bbF = \bbQ(q,t)$.
The parameter $\alpha$ corresponds to the highest weight $h$ 
of this module $V_h$,
i.e., we have $V_h =  \bbF(T_{-1},T_{-2},\cdots)\ket{h}$ 
with $T_0 \ket{h}=h\ket{h}$.
The above map gives an isomorphism $V_h \simto \Lambda_\bbF$ 
with $\ket{h} \longmapsto 1$.

The generating series $\sum_n T_n z^{-n}$ 
is related to the operator $\eta(z)$ given at \eqref{eq:eta} 
in the following way.
Set 
\begin{align*}
\psi(z) := 
\exp\left(
-\sum_{n>0} \dfrac{1-t^n}{1+(q/t)^n}\dfrac{p_n}{n} \dfrac{z^n}{(t^3/q)^{n/2}}\right)
(q/t)^{-1/2}q^{-\alpha}.
\end{align*}
Then we have 
\begin{align*}
\psi(z) \left(\sum_{n \in \bbZ} T_n z^{-n}\right) = \eta(z/p^{1/2})
+\exp\left(
 \sum_{n>0} (1-q^n)\dfrac{\partial}{\partial p_n} \dfrac{z^n}{(t/q)^{n/2}}\right)  
(q/t)^{-1}q^{-2 \alpha}.
\end{align*}
Comparing the coefficients of $z^0$ in both sides, we have
\begin{align}\label{eq:pt-eta}
\sum_{n \ge 0} \psi_{-n} T_n = \eta_0 + (q/t)^{-1}q^{-2\alpha},
\end{align}
where we put $\psi(z) = \sum_{n \ge 0} \psi_{-n} z^n$.

The consequence is derived by taking the limit $\hbar \to 0$ with 
$q = -e^\hbar$ and $t=-e^{\hbar \gamma}$ of \eqref{eq:pt-eta}. 
In fact, we have the following expansion of $\sum_{n\in\bbZ}T_n z^{-n}$
in terms of $\hbar$.
\begin{align*}
&(q/t)^{-1/2}q^{-\alpha} 
\sum_{n \in \bbZ} T_n z^{-n} \\
=&\exp\left(\sum_{n>0}\dfrac{p_{2n-1}}{2n-1}z^{2n-1}\right) 
   \exp\left(-2\sum_{n>0}\dfrac{\partial}{\partial p_{2n-1}}z^{-2n+1}\right) 
   \left\{
    1+\hbar \left(\dfrac{\gamma}{2}\sum_{n>0}p_{2n} z^{2n} 
     +\sum_{n>0}\dfrac{\partial}{\partial p_{2n}} z^{-2n} \right) \right\} \\
  &+ \hbar(\gamma-2) 
     \left( \dfrac{1}{2}\sum_{n>0} \dfrac{p_{2n-1}}{2n-1} z^{2n-1}
           + \sum_{n>0} \dfrac{\partial}{\partial p_{2n-1}} z^{-2n+1}\right)\\
  &+\Biggl[\exp\left(-\sum_{n>0}\dfrac{p_{2n-1}}{2n-1}z^{2n-1}\right) 
   \exp\left(2\sum_{n>0}\dfrac{\partial}{\partial p_{2n-1}}z^{-2n+1}\right) 
   \left\{
    1-\hbar \left(\dfrac{\gamma}{2}\sum_{n>0}p_{2n} z^{2n} 
     +\sum_{m}\dfrac{\partial}{\partial p_{2n}} z^{-2n} \right) \right\} \\
  &\qquad + \hbar \gamma 
    \left( \dfrac{1}{2}\sum_{n>0} \dfrac{p_{2n-1}}{2n-1} z^{2n-1}
                     + \sum_{n>0} \dfrac{\partial}{\partial p_{2n-1}}z^{-2n+1}\right)
   \Biggr] \left(1-\hbar(1+2\alpha-\gamma)\right) + O(\hbar^2).
\end{align*}
Let us rewrite this expansion as 
\begin{align}
(q/t)^{-1/2}q^{-\alpha} 
\sum_{n \in \bbZ} T_n &z^{-n} 
=T^0(z) + \hbar T^1(\gamma;z) + O(\hbar^2).
\end{align}
Then obviously one has 
\begin{align*}
T^0(z) 
= 
&\exp\left(\sum_{n>0}\dfrac{p_{2n-1}}{2n-1}z^{2n-1}\right) 
   \exp\left(-2\sum_{n>0}\dfrac{\partial}{\partial p_{2n-1}}z^{-2n+1}\right) \\
&+\exp\left(-\sum_{n>0}\dfrac{p_{2n-1}}{2n-1}z^{2n-1}\right) 
   \exp\left(2\sum_{n>0}\dfrac{\partial}{\partial p_{2n-1}}z^{-2n+1}\right). 
\end{align*}

Similarly we have the expansion 
$$
 (q/t)^{1/2}q^{\alpha}  \psi(z)
 =\psi^0(z) + \hbar \psi^1(\gamma;z) + O(\hbar^2)
$$
of $\psi(z)$.
Thus taking the $\hbar^1$ part of \eqref{eq:pt-eta} 
and using \eqref{eq:eta-hbar},
one obtains
\begin{align}\label{eq:pt=C10}
 \sum_{n\ge0} \left(\psi^1_{-n}(\gamma) T^0_n + \psi^0_{-n}T^1_n(\gamma)\right)
= C^1_0(\gamma).
\end{align}
Here we used the $z$-expansions as 
$$
 \psi^0(z)=\sum_{n \ge 0}\psi^0_{-n} z^n,\quad
 \psi^1(\gamma;z)=\sum_{n \ge 0}\psi^1_{-n}(\gamma) z^n,\quad
 T^0(z)=\sum_{n \in \bbZ}T^0_{n} z^{-n},\quad
 T^1(\gamma;z)=\sum_{n \in \bbZ}T^1_{n}(\gamma) z^{-n}.
$$

The important point here is that when $\gamma=t^{-2}$ we have
\begin{align*}
T^1(t^{-2};z) = 
&\sqrt{2} z \fs_t\left(\ff\left(G(z^2)\right)-2\rho (\partial b)(z^2)\right) \\
&+(t^{-2}-1) 
  \left(\sum_{n>0} p_{2n-1}z^{2n-1}
      +2\sum_{n>0} (2n-1)\dfrac{\partial}{\partial p_{2n-1}}z^{-2n+1}\right) \\
&+(t^{-2}-1-2\alpha)
 \exp\left(-\sum_{n>0}\dfrac{p_{2n-1}}{2n-1}z^{2n-1}\right) 
   \exp\left(2\sum_{n>0}\dfrac{\partial}{\partial p_{2n-1}}z^{-2n+1}\right),
\end{align*}
which is the direct consequence of the map $\fs_t$ given at \eqref{eq:fst-rec} 
and the bosonization $\ff$ at \eqref{eq:bosonization}.
By this expression, we find
\begin{align}\label{eq:T1G}
 T^1_n(t^{-2}) \in \langle G_{1/2},G_{3/2},\cdots \rangle,
\end{align}
for $n >0$,
where the right hand side means the subalgebra of the enveloping algebra 
$U(\SVir)$ generated by the positive generators $G_{k}$'s ($k>0$).
Similarly one can prove 
\begin{align}\label{eq:T0G}
 T^0_n \in \langle G_{1/2},G_{3/2},\cdots \rangle,
\end{align}
for $n >0$,


Hence from \eqref{eq:pt=C10}, \eqref{eq:T1G},  \eqref{eq:T0G}
and the defining property
$$
 G_k \chi_{r,s}^+ = 0 \quad (k>0)
$$
of the singular vector, 
one can compute $C^1_0(t_{+}^{-2}) \fs_{t_+}(\chi_{r,s}^+)$ as
$$
 C^1_0(t_{+}^{-2}) \fs_{t_+}(\chi_{r,s}^+)= 
 \left(\psi^1_0(\gamma)T^0_0+\psi^0_0T^1_0(\gamma)\right)
 \fs_{t_+}(\chi_{r,s}^+).
$$
Now the consequence follows from the direct calculation 
using Lemma \ref{lem:C} (1).
\end{proof}

\begin{proof}[{Proof of Conjecture \ref{cnj:main}}]
By the expression \ref{eq:chi_rs} of the singular vector,
one can check that it has the expansion
$$
 \fs_{t^+}\left(\chi^+_{r,s}\right)
 \propto m_{(r^s)} + \sum_{\mu < (r^s)} c_{\mu} m_\mu
$$
with some coefficients $c_{\mu} \in \bbC$.
Now we see that Conjecture \ref{cnj:main} is true 
by Proposition \ref{prop:C10} and 
the characterization of the Uglov polynomial (Proposition \ref{prop:Uglov}).
\end{proof}

\appendix
\section{Selberg integral}
\label{sect:Selberg}

Let us follow  \cite[Chap.\ 8]{AAR},
which explains Aomoto's proof \cite{A} of the Selberg integral \cite{S}.

Let $n$ be a positive integer and $\alpha,\beta,\gamma \in \bbC$.
Set $C_n := [0,1]^n \subset \bbR^n$ and 
\begin{align*}
&w(x) = w_n(x;\alpha,\beta,\gamma) := 
 \prod_{i=1}^n x_i^{\alpha-1}(1-x_i)^{\beta-1}
 \prod_{1\le i < j \le n}|x_i-x_j|^{2 \gamma},
\\
&S_n(\alpha,\beta,\gamma) := \int_{C_n} dx w(x). 
\end{align*}
For a multi-index $m = (m_1,\ldots,m_n) \in \bbZ^n$ we also set 
\begin{align*}
S_n(m;\alpha,\beta,\gamma) := \int_{C_n} dx w(x) x^m.
\end{align*}

\begin{fct}\label{fct:SA}
If $\re \alpha > 0, \re \beta >0$ and 
$\re \gamma > -\min\{1/n,(\re \alpha)/(n-1),(\re \beta)/(n-1)\}$,
then
\begin{align*}
S_n(\alpha,\beta,\gamma) = \prod_{j=1}^n 
 \dfrac{\Gamma\left(\alpha+(j-1)\gamma\right) \Gamma\left(\beta+(j-1)\gamma\right)
        \Gamma\left(1+j \gamma\right)}
       {\Gamma\left(\alpha+\beta+(n+j-2)\gamma\right) \Gamma\left(1+\gamma\right)}
\end{align*}
and 
\begin{align*}
S_n\left((1^k);\alpha,\beta,\gamma\right) 
= \int_{C_n} dx w(x) \prod_{i=1}^k x_i 
= S_n(\alpha,\beta,\gamma) 
 \prod_{j=1}^n \dfrac{\alpha+(n-j)\gamma}{\alpha+\beta+(2n-j-1)\gamma}
\end{align*}
for $1\le k \le n$.
\end{fct}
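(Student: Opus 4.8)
The plan is to follow Aomoto's method, deriving every relation from the vanishing of integrals of total derivatives over the cube $C_n=[0,1]^n$. The mechanism is this: for a prefactor $\phi$ that is polynomial in the $t_i$, integrating $\partial_{t_1}(\phi\,w)$ first in $t_1$ produces only the boundary values of $\phi\,w$ on the faces $\{t_1=0\}$ and $\{t_1=1\}$; if $\phi$ carries one factor $t_1$ and one factor $1-t_1$, then $\phi\,w\sim t_1^{\alpha}$ near $t_1=0$ and $\sim(1-t_1)^{\beta}$ near $t_1=1$, so under $\re\alpha>0$, $\re\beta>0$ (with the stated $\gamma$-condition guaranteeing absolute convergence) these boundary values vanish and $\int_{C_n}\partial_{t_1}(\phi\,w)\,dt=0$. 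Writing $A_k:=S_n((1^k);\alpha,\beta,\gamma)$, so that $A_0=S_n(\alpha,\beta,\gamma)$, I would take $\phi=t_1(1-t_1)\,t_2\cdots t_k$ and expand using
\[
\frac{1}{w}\frac{\partial w}{\partial t_1}
=\frac{\alpha-1}{t_1}-\frac{\beta-1}{1-t_1}+2\gamma\sum_{j\neq 1}\frac{1}{t_1-t_j}.
\]
All the resulting monomial integrals reduce, by the symmetry of $w$ under permutations of the $t_i$, to $A_k$ and $A_{k-1}$, except for the Coulomb sum.

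The heart of the argument is the treatment of $\sum_{j\neq1}(t_1-t_j)^{-1}$, handled by symmetrizing each summand under the transposition $t_1\leftrightarrow t_j$. For an index $j\in\{2,\dots,k\}$ that also carries a monomial factor, the rest of the integrand is symmetric in $t_1,t_j$, so the antisymmetry of $(t_1-t_j)^{-1}$ forces that term down to a multiple of $A_k$; there are $k-1$ such indices. For an index $j\in\{k+1,\dots,n\}$ carrying no factor, the same antisymmetrization replaces $(1-t_1)t_1\cdots t_k$ by $t_2\cdots t_k(t_1-t_j)[1-t_1-t_j]$, the factor $(t_1-t_j)$ cancels the pole, and the term becomes a combination of $A_{k-1}$ and $A_k$; there are $n-k$ such indices. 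Collecting all contributions, the coefficient of $A_{k-1}$ is $\alpha+(n-k)\gamma$ and that of $A_k$ is $-[\alpha+\beta+(2n-k-1)\gamma]$, so that
\[
A_k=\frac{\alpha+(n-k)\gamma}{\alpha+\beta+(2n-k-1)\gamma}\,A_{k-1}\qquad(1\le k\le n).
\]
Telescoping this from $k$ down to $0$ gives $A_k=S_n(\alpha,\beta,\gamma)\prod_{j=1}^{k}\frac{\alpha+(n-j)\gamma}{\alpha+\beta+(2n-j-1)\gamma}$, which is the second displayed identity of the statement.

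To evaluate $S_n(\alpha,\beta,\gamma)$ itself, I would specialize the recursion at $k=n$. Since $A_n=\int_{C_n}\prod_i t_i\,w\,dt=S_n(\alpha+1,\beta,\gamma)$, this yields the one-step shift
\[
S_n(\alpha+1,\beta,\gamma)=S_n(\alpha,\beta,\gamma)\prod_{j=1}^{n}\frac{\alpha+(n-j)\gamma}{\alpha+\beta+(2n-j-1)\gamma},
\]
and the substitution $t_i\mapsto 1-t_i$ gives the symmetric shift in $\beta$. A direct check using $\Gamma(x+1)=x\Gamma(x)$ shows that the proposed product of Gamma quotients satisfies both shift relations. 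It then remains to remove the ambiguity of a factor periodic in $\alpha$ and $\beta$: I would do this by the standard rigidity argument---both sides are analytic in $(\alpha,\beta)$ in the convergence region with matching poles and controlled growth in vertical strips, so their ratio is a bounded entire periodic function, hence constant, and the constant is pinned down by the degenerate case $\gamma=0$, where $w$ factorizes and $S_n$ is a product of $n$ Euler Beta integrals $B(\alpha,\beta)$.

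I expect the main obstacle to be the symmetrization step that produces the exact coefficients $(n-k)\gamma$ and $(2n-k-1)\gamma$: one must keep precise track of which variables carry the monomial factor and split the Coulomb sum accordingly, since it is this bookkeeping (together with the algebraic cancellation of $(t_1-t_j)$ in the non-carrying case) that generates the $\gamma$-dependence. A secondary subtlety lies in the final evaluation, because the $\alpha$- and $\beta$-shift relations determine $S_n$ only up to a doubly periodic factor, so the analyticity-and-normalization argument is genuinely required rather than cosmetic.
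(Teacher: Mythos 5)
Your derivation of the recursion $A_k=\frac{\alpha+(n-k)\gamma}{\alpha+\beta+(2n-k-1)\gamma}A_{k-1}$ --- integrating $\partial_{t_1}\bigl[t_1(1-t_1)t_2\cdots t_k\,w\bigr]$ over $C_n$ and disposing of the Coulomb sum by the transposition $t_1\leftrightarrow t_j$, split according to whether $j\le k$ or $j>k$ --- is correct, and it is exactly the argument the paper gives for its Lemma~\ref{lem:AAR}, following \cite{AAR}. (Your bookkeeping is in fact the corrected version of what is printed: in Lemma~\ref{lem:AAR} the term $-(\alpha+\beta)S(k-1)$ should read $-(\alpha+\beta)S(k)$, the first case of \eqref{eq:8.2.1-2} should be $\tfrac{1}{2}S(k)$ rather than $\tfrac{1}{2}S(k-1)$, and in Fact~\ref{fct:SA} the product in the second display should run over $1\le j\le k$, not $1\le j\le n$; your $\prod_{j=1}^{k}$ is the correct Aomoto formula.) Note, however, that the paper does not actually prove Fact~\ref{fct:SA}: it quotes the two evaluations from \cite{S} and \cite{A} and proves only the recursion, which is all it needs for the vanishing result in the appendix. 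So the portion of your proposal that overlaps the paper is sound; the problem is in the part that goes beyond it.

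The genuine gap is in your evaluation of $S_n(\alpha,\beta,\gamma)$ itself. Writing $F(\alpha,\beta,\gamma)$ for the Gamma product, your shift relations in $\alpha$ and $\beta$ together with the periodicity-and-growth (Liouville-type) argument show only that $S_n/F=c(\gamma)$, a constant in $(\alpha,\beta)$ which may still depend on $\gamma$: there is no shift relation in $\gamma$, so nothing in your argument constrains that dependence, and the check at $\gamma=0$ gives only $c(0)=1$. Concretely, for any analytic $h$ with $h(0)=1$, the function $F(\alpha,\beta,\gamma)\,h(\gamma)$ satisfies every property you invoke --- the same $(\alpha,\beta)$-shift relations, the same analyticity and vertical-strip growth, the same value at $\gamma=0$ --- so those properties cannot single out $F$. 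Pinning down $c(\gamma)$ is precisely the hard step in any proof of Selberg's formula, and it must be done with $\gamma$ held fixed: for instance by induction on $n$ using the residue relation $\lim_{\alpha\to 0^+}\alpha\,S_n(\alpha,\beta,\gamma)=n\,S_{n-1}(2\gamma,\beta,\gamma)$, which one checks the Gamma product also satisfies (so $c_n(\gamma)=c_{n-1}(\gamma)=\cdots=c_1(\gamma)=1$, the case $n=1$ being Euler's beta integral); or by Selberg's original reduction to nonnegative integer $\gamma$ via Carlson's theorem; or by genuine Laplace asymptotics as $\alpha\to+\infty$ matched against Stirling. Some ingredient of this kind is indispensable and must replace your $\gamma=0$ normalization.
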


The first integral is due to Selberg \cite{S} and 
the second is due to Aomoto \cite{A}.

For a while we use the abbreviation 
$S(k) := S_n\left((1^k);\alpha,\beta,\gamma\right)$.
Aomoto derived the integral $S(k)$ 
by constructing the following recursive formula among $S(k)$'s.

\begin{lem}\label{lem:AAR}
Setting $S(k) := S_n\left((1^k);\alpha,\beta,\gamma\right)$, we have
\begin{align*}
0 = \alpha S(k-1) - (\alpha+\beta) S(k-1) +\gamma (n-k) S(k-1)
   -\gamma (2n-k-1)S(k).
\end{align*}
\end{lem}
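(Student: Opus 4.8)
The plan is to derive the recursion, in the manner of Aomoto, from the vanishing of an integral of a total derivative. Under the hypotheses $\re\alpha>0$ and $\re\beta>0$ of Fact \ref{fct:SA}, multiplying the weight $w(x)$ by $x_k(1-x_k)$ upgrades its boundary behaviour $x_k^{\alpha-1}(1-x_k)^{\beta-1}$ near the faces $x_k=0$ and $x_k=1$ of $C_n$ to $x_k^{\alpha}(1-x_k)^{\beta}$, which vanishes there. Hence the function $x_1\cdots x_{k-1}\,x_k(1-x_k)\,w(x)$ vanishes on the boundary of $C_n$ in the $x_k$-direction, and
$$
0=\int_{C_n}\frac{\partial}{\partial x_k}\Bigl[x_1\cdots x_{k-1}\,x_k(1-x_k)\,w(x)\Bigr]\,dx .
$$
The whole proof consists in expanding the right-hand side.

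First I would perform the differentiation through the logarithmic derivative
$$
\frac{1}{w}\frac{\partial w}{\partial x_k}
 =\frac{\alpha-1}{x_k}-\frac{\beta-1}{1-x_k}
  +2\gamma\sum_{j\neq k}\frac{1}{x_k-x_j}.
$$
The terms not involving $\gamma$, namely $\partial_{x_k}[x_k(1-x_k)]=1-2x_k$ together with $x_k(1-x_k)$ times the first two summands above, telescope to the polynomial $\alpha-(\alpha+\beta)x_k$. Integrating $x_1\cdots x_{k-1}\bigl(\alpha-(\alpha+\beta)x_k\bigr)w$ over $C_n$ and using the definitions $S(k-1)=S_n((1^{k-1});\alpha,\beta,\gamma)$ and $S(k)=S_n((1^{k});\alpha,\beta,\gamma)$ then yields $\alpha S(k-1)-(\alpha+\beta)S(k)$.

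The substantial step is the $\gamma$-contribution
$$
2\gamma\int_{C_n} x_1\cdots x_{k-1}\,x_k(1-x_k)\sum_{j\neq k}\frac{1}{x_k-x_j}\,w(x)\,dx ,
$$
which I would evaluate by symmetrizing the (fully $S_n$-invariant) measure $w\,dx$. I split the sum according to whether $j\ge k+1$, so that $x_j$ is absent from the prefactor $x_1\cdots x_{k-1}$, or $j\le k-1$, so that it is present. For $j\ge k+1$ the transposition $x_k\leftrightarrow x_j$ fixes both $w\,dx$ and the prefactor, and symmetrizing replaces $x_k(1-x_k)/(x_k-x_j)$ by $\tfrac12(1-x_k-x_j)$, so the pole disappears; since $\int_{C_n} x_1\cdots x_{k-1}x_j\,w=S(k)$ for every $j\ge k$ by symmetry, these terms sum to $\tfrac{n-k}{2}\bigl(S(k-1)-2S(k)\bigr)$. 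For $j\le k-1$ the same transposition also permutes the prefactor, and after symmetrization the integrand collapses to $-\tfrac12\,x_1\cdots x_{k-1}x_k\,w$, contributing $-\tfrac{k-1}{2}S(k)$ for each of the $k-1$ indices. Multiplying by $2\gamma$ and adding gives $\gamma(n-k)S(k-1)-\gamma(2n-k-1)S(k)$, and combining with the previous paragraph produces the asserted recursion.

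The main obstacle is exactly this symmetrization of the $\gamma$-sum: one must verify that the apparent poles at $x_k=x_j$ cancel after antisymmetrizing the numerators, and must handle the cases $j<k$ and $j>k$ separately, since the monomial $x_1\cdots x_{k-1}$ reduces the full symmetric-group invariance of $w\,dx$ to invariance under permutations preserving the set $\{1,\dots,k-1\}$. I note finally that the computation closes only into the form $\alpha S(k-1)-(\alpha+\beta)S(k)+\gamma(n-k)S(k-1)-\gamma(2n-k-1)S(k)=0$, whose ratio $S(k)/S(k-1)=\bigl(\alpha+(n-k)\gamma\bigr)/\bigl(\alpha+\beta+(2n-k-1)\gamma\bigr)$ reproduces the $k$-th factor of the Aomoto evaluation in Fact \ref{fct:SA}, which is the consistency check I would use to confirm the final bookkeeping.
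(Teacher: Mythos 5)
Your proof is correct and takes essentially the same route as the paper: Aomoto's total-derivative trick followed by evaluation of the singular terms via the transposition $x_k\leftrightarrow x_j$ (the paper differentiates with respect to $x_1$ and splits $(1-x_1)\prod_{i=1}^k x_i$ into the two integrals \eqref{eq:8.2.1-1} and \eqref{eq:8.2.1-2} rather than symmetrizing the combined rational function, but it is the same computation). One remark: the identity you arrive at, $\alpha S(k-1)-(\alpha+\beta)S(k)+\gamma(n-k)S(k-1)-\gamma(2n-k-1)S(k)=0$, is the correct one --- the lemma as printed has a typo ($S(k-1)$ where $S(k)$ should stand in the second term), as does the case $2\le j\le k$ of \eqref{eq:8.2.1-2} (which should read $\tfrac{1}{2}S(k)$), and your consistency check against Fact \ref{fct:SA} settles this.
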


\begin{proof}
Consider the following integral of the total derivative.
\begin{align*}
 0&= \int_{C_n} dx \dfrac{\partial}{\partial x_1}
     \left[ w(x) (1-x_1) \prod_{i=1}^k x_i \right] \\
  &= \alpha \int_{C_n} dx w(x) (1-x_1) \prod_{i=2}^k x_i 
    -\beta \int_{C_n} dx w(x) \prod_{i=1}^k x_i 
    +2 \gamma \sum_{j=2}^n \int_{C_n} dx w(x) (1-x_1) \dfrac{\prod_{i=1}^k x_i }{x_1-x_j}.
\end{align*}
Then the result follows from 
\begin{align}\label{eq:8.2.1-1}
 \int_{C_n} dx w(x) \dfrac{\prod_{i=1}^k x_i }{x_1-x_j}
 =\begin{cases}
   0 & 2 \le j \le k \\
   \dfrac{1}{2}S(k-1) & k < j \le n
  \end{cases}
\end{align}
and
\begin{align}\label{eq:8.2.1-2}
 \int_{C_n} dx w(x) \dfrac{x_1 \prod_{i=1}^k x_i }{x_1-x_j}
 =\begin{cases}
   \dfrac{1}{2}S(k-1) & 2 \le j \le k \\
   S(k) & k < j \le n
  \end{cases}.
\end{align}
These equalities follows from the transposition $x_1 \longleftrightarrow x_j$.
For \eqref{eq:8.2.1-1} with $j \le k$,
it changes the sign of the integrand, so the integral vanishes.
In the case $j>k$, the same transposition leads to 
$$
 \dfrac{x_1}{x_1-x_j} \longmapsto 1-\dfrac{x_1}{x_1-x_j}
$$,
and the result immediately follows.
Similarly \eqref{eq:8.2.1-2} with $j \le k$ follows from
\begin{align*}
\dfrac{x_1^2x_j}{x_1-x_j} \longmapsto x_1 x_j-\dfrac{x_1^2x_j}{x_1-x_j}
\end{align*}
and the case $j > k$ follows from
\begin{align*}
\dfrac{x_1^2}{x_1-x_j} = x_1+\dfrac{x_1x_j}{x_1-x_j}.
\end{align*} 
\end{proof}

In the bosonization of Virasoro singular vector \cite{MY},
the following Selberg integral appears.
\begin{align*}
I(m)=I(m;r,t) := \int dz z^m \Phi(z),\quad
\Phi(z) = \Phi_r(z;t) := \prod_{i=1}^r z_i^{(1-r)t-1}\prod_{1\le i<j\le r}(z_i-z_j)^{2 t}.
\end{align*}
Here $t\in \bbC$, $r \in \bbZ_{\ge1}$ and $m \in \bbZ^r$.
In terms of $S_n(m;\alpha,\beta,\gamma)$ it is given by 
$$
 I(m) = S_r\left(m;(1-r)t,1,t\right).
$$
By Fact \ref{fct:SA} we have 
\begin{align*}
 I(0) = S_r((1-r)t,1,t) = \cdots = \prod_{j=1}^{r-1} 
   \dfrac{\Gamma\left((j-r)t\right) \Gamma\left(1+(j+1)t\right)}
         {\Gamma\left(1+t\right)}
\end{align*}
(this form appears in \cite{MY})
and
\begin{align*}
 I(1^k) =  S((1-r)t,1,t) \prod_{j=1}^k\dfrac{(1-j)t}{1+(r-j)t}=0
\end{align*}
for $1 \le k \le r$.

We now check
\begin{prop}
For $m \in \bbZ^r$ with $|m| := \sum_{i=1}^r m_i \neq 0$ 
we have $I(m) = 0$,
assuming the integral is defined.
\end{prop}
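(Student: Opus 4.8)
The plan is to exploit the scaling homogeneity of the integrand together with the fact that the screening contour $C$ is a cycle without boundary, in the spirit of the total-derivative computation already used in the proof of Lemma~\ref{lem:AAR}. First I would record the total degree of $\Phi$: the $r$ factors $z_i^{(1-r)t-1}$ contribute $r\bigl((1-r)t-1\bigr)$, while the $\binom{r}{2}$ factors $(z_i-z_j)^{2t}$ contribute $r(r-1)t$ in total, so that the degree is
\[
 r\bigl((1-r)t-1\bigr)+r(r-1)t = -r ;
\]
that is, $\Phi$ is homogeneous of degree $-r$. Hence $z^m\Phi(z)$ is homogeneous of degree $|m|-r$, and Euler's identity gives $\sum_{i=1}^r z_i\partial_{z_i}\bigl(z^m\Phi\bigr)=(|m|-r)\,z^m\Phi$.

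The key step is to rewrite this relation as a divergence. By the product rule,
\[
 \sum_{i=1}^r \frac{\partial}{\partial z_i}\bigl(z_i\,z^m\Phi(z)\bigr)
 = r\,z^m\Phi + \sum_{i=1}^r z_i\partial_{z_i}\bigl(z^m\Phi\bigr)
 = \bigl(r+(|m|-r)\bigr)\,z^m\Phi = |m|\,z^m\Phi(z).
\]
Integrating both sides over $C$ and using that each integral of a total $z_i$-derivative vanishes yields $|m|\,I(m)=0$. Since $|m|\neq 0$ by hypothesis, this forces $I(m)=0$.

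The one genuine point requiring care is the justification that $\oint_C \partial_{z_i}\bigl(z_i z^m\Phi\bigr)\,dz=0$ for each $i$. Because $\Phi$ is multivalued --- carrying the local system determined by the exponents $(1-r)t-1$ and $2t$ --- this is not Stokes' theorem on a domain with boundary (as it was over the cube in Lemma~\ref{lem:AAR}, where the boundary terms vanished because $\re\alpha,\re\beta>0$), but rather the assertion that $z_iz^m\Phi$ admits a single-valued primitive along the regularized twisted cycle $C$. This is exactly the regularization underlying the existence of $C$ in Fact~\ref{fct:singular}; granting it, the total-derivative integrals vanish and the argument closes. Equivalently, one may run the whole computation as a radial rescaling $z\mapsto\lambda z$: scale-invariance of the cycle gives $I(m)=\lambda^{|m|}I(m)$ for all admissible $\lambda$, again forcing $I(m)=0$ when $|m|\neq 0$.
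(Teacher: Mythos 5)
Your homogeneity computation (degree $-r$ for $\Phi$), the Euler identity, and the divergence form $\sum_i\partial_{z_i}\bigl(z_iz^m\Phi\bigr)=|m|\,z^m\Phi$ are all correct, but the step you yourself flag as the ``one genuine point requiring care'' is not a technicality that can be granted: it fails for the contour the statement is actually about, and it is the entire content of the proposition. The paper defines $I(m)=S_r\bigl(m;(1-r)t,1,t\bigr)$, i.e.\ an integral over the cube $C_r=[0,1]^r$, and its proof integrates over $C_r$. Since here $\beta=1$, the weight contains the factor $(1-x_i)^{\beta-1}=1$, so $z_iz^m\Phi$ does \emph{not} vanish on the face $z_i=1$; hence $\int_{C_r}\partial_{z_i}\bigl(z_iz^m\Phi\bigr)\,dz$ equals a generically nonzero $(r-1)$-dimensional face integral, not $0$, and the conclusion $|m|\,I(m)=0$ does not follow. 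The case $r=1$ makes this concrete: $\Phi=z^{-1}$, so $\int_0^1\partial_z\bigl(z\cdot z^mz^{-1}\bigr)\,dz=\int_0^1 mz^{m-1}\,dz=1\neq0$, and correspondingly $I(m)=\int_0^1z^{m-1}\,dz=1/m\neq0$: the boundary term you discard is exactly the whole answer. This is precisely why the paper, following Aomoto's computation in Lemma~\ref{lem:AAR}, differentiates $(1-x_1)x^mw(x)$ rather than $x_1x^mw(x)$: the inserted factor $(1-x_1)$ kills the $x_1=1$ face (the $x_1=0$ face dies by the convergence hypothesis), at the price of the cross terms $\int_{C_r}w(x)(1-x_1)x^m/(x_1-x_j)\,dx$, which the paper then disposes of by the transposition trick \eqref{eq:int0} and induction on $|m|$. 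Your rescaling variant $z\mapsto\lambda z$ has the same defect, since $[0,1]^r$ is not scale-invariant.

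Deferring the problem to ``the regularized twisted cycle $C$ of Fact~\ref{fct:singular}'' does not close the gap; it replaces the object in the statement by a different one. On a closed, scale-invariant twisted cycle your one-line argument is indeed valid (twisted-exact forms integrate to zero), but to deduce anything about $S_r\bigl(m;(1-r)t,1,t\bigr)$ you must compare the cycle integral with the cube integral, and the comparison constants (products of monodromy factors) must be shown to be nonzero --- exactly what fails in the $r=1$ example, where the closed-circle integral of $z^{m-1}$ vanishes while the interval integral does not, the discrepancy being the discarded boundary term. Fact~\ref{fct:singular} asserts only that \emph{some} path exists making the screening integral a nonzero singular vector; it supplies neither scale-invariance of that path nor its identification with the cube defining $I(m)$. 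So the key analytic input of your proof is assumed rather than proven. To salvage your approach you would need to fix a concrete twisted cycle, prove the (easy) vanishing there, and then prove nonvanishing of the constant relating that cycle integral to $I(m)$ --- at which point you have done work comparable to the paper's boundary and cross-term analysis.
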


\begin{proof}
We can assume $m \in \bbZ_{\ge0}^r$.
Our proof is by the induction on $|m|$.
Similarly as in the proof of Lemma \ref{lem:AAR},
consider the integral 
\begin{align*}
 0 &= \int_{C_r} dx \dfrac{\partial}{\partial x_1}
      \left[(1-x_1)x^m w(x) \right] \\
   &= (\alpha-1+m_1)I(m-e_1)-(\alpha+\beta+m_1-1)I(m) 
      + 2\gamma \sum_{j=2}^r \int_{C_r} dx w(x) \dfrac{(1-x_1)x^m}{x_1-x_j}.
\end{align*}
Then the result comes from 
\begin{align}\label{eq:int0}
 \int_{C_r}dx w(x) \dfrac{x^m}{x_1-x_j} = 0
\end{align}
for any $m \in \bbZ_{\ge0}^r \setminus\{0\}$.
To show this equality,
consider the transposition $x_1 \longleftrightarrow x_j$. 
Then we have 
$$
 \dfrac{x^m}{x_1-x_j} \longmapsto 
 \dfrac{x^{s_{1 j}(m)}}{x_j-x_1} 
 =\dfrac{x^{s_{1 j}(m)}-x^{m}}{x_j-x_1} - \dfrac{x^m}{x_1-x_j}.
$$
Since the part $(x^{s_{1 j}(m)}-x^{m})/(x_j-x_1)$ is a polynomial 
and one can use the induction step,
we know that the integral \eqref{eq:int0} must vanish.
\end{proof}

\section{Taking the limit of numbers of variables in symmetric functions}

In this section we address an interpretation of the operators 
$C^0_0$ and $C^1_0(\gamma)$ introduced in \S \ref{sect:main}, Lemma \ref{lem:C}.
Let $x_1,x_2,\cdots,x_N$ be the indeterminates and consider 
the space 
$$
 \Lambda_N := \bbZ[x_1,\ldots,x_N]^{\mathfrak{S}_N}
$$
of symmetric polynomials.
The base change of $\Lambda_N$ is denoted as before:
$\Lambda_{N,R} := \Lambda_{N} \otimes_{\bbZ} R$ 
for a ring $R$.
We have a projective system
$\{(\Lambda_{N,R})_{N\in\bbZ}, (p_{M,N})_{M \ge N}\}$ 
of $R$-modules, where 
$$
 \pr_{M,N}: \Lambda_{M,R} \longto \Lambda_{N,R},\quad
 x_i \longmapsto 0\ (i>N)
$$ 
is the restriction map.
The space $\Lambda$ of symmetric functions is nothing but 
the projective limit of this system.
In particular we have a natural projection 
$$
 \pr_N: \Lambda_R \to \Lambda_{N,R}
$$ 
for each $N$
(see \cite[Chap.\ I]{M} for the detail).

\begin{dfn}
We say that an operator $O$ acting on $\Lambda_R$ is 
the limit of the operators $O_{(N)}$ acting on $\Lambda_{N,R}$,
and denote $O = \varprojlim O_{(N)}$ if $O$ and $O_{(N)}$'s are 
compatible with the maps $\pr_{M,N}$ and $\pr_N$.
\end{dfn}

Here we recall the technique of \cite{AMOS} for 
the calculation of the limit of differential  operators.

\begin{lem}
Set a formal series
$$
 A(x) := \sum_{n>0} x^n \dfrac{\partial}{\partial p_n}.
$$
Then the projection map $\pr_N:\Lambda_{\bbQ} \to \Lambda_{N,\bbQ}$ 
is expressed as
$$
 \pr_N = \exp\left(\sum_{i=1}^{N} A(x_i) \right).
$$
\end{lem}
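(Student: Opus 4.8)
The plan is to exploit that $\Lambda_{\bbQ}$ is freely generated as a commutative $\bbQ$-algebra by the power sums, $\Lambda_{\bbQ} \cong \bbQ[p_1,p_2,\ldots]$, and that both $\pr_N$ and $\exp(\sum_{i=1}^{N} A(x_i))$ are algebra homomorphisms; it will then suffice to compare their effect on the generators $p_n$. By its very definition $\pr_N$ is the restriction to the first $N$ variables, i.e. the algebra map determined by $p_n \longmapsto p_n(x_1,\ldots,x_N) = \sum_{i=1}^{N} x_i^n$, so the whole problem reduces to computing the action of the exponential on each $p_n$.

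First I would record that each $A(x) = \sum_{n>0} x^n \partial/\partial p_n$ is a derivation of $\bbQ[p_1,p_2,\ldots]$ (with $x$ treated as a scalar), being a $\bbQ$-linear combination of the derivations $\partial/\partial p_n$. Since $A(x)$ strictly lowers the polynomial degree in the $p$'s, it is locally nilpotent on each homogeneous component, so $\exp(A(x))$ is a finite sum there and defines an algebra endomorphism. A one-line computation gives $A(x)\,p_n = x^n$ and $A(x)\,x^n = 0$, whence $\exp(A(x))\,p_n = p_n + x^n$. Moreover the operators $A(x_i)$ for distinct $i$ commute, because their coefficients $x_i^n$ are scalars and $[\partial/\partial p_m,\partial/\partial p_n] = 0$; therefore $\exp(\sum_{i=1}^{N} A(x_i)) = \prod_{i=1}^{N}\exp(A(x_i))$, and applying these homomorphisms successively yields $\exp(\sum_{i=1}^{N}A(x_i))\,p_n = p_n + \sum_{i=1}^{N} x_i^n$.

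Finally I would match this with $\pr_N$. Since the target $\Lambda_{N,\bbQ} \subset \bbQ[x_1,\ldots,x_N]$ carries no $p$-variables, the operator is to be read after specializing the input power sums $p_n$ to $0$ (equivalently, after evaluating on the vacuum $1$); under this convention $\exp(\sum_{i=1}^{N} A(x_i))$ sends $p_n \longmapsto \sum_{i=1}^{N} x_i^n$, which is exactly $\pr_N(p_n)$. As both maps are algebra homomorphisms agreeing on the generators $p_n$, they coincide on all of $\Lambda_{\bbQ}$, which is the assertion.

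The step I expect to demand the most care is this last one: making precise the passage from the operator $\exp(\sum_{i=1}^{N} A(x_i))$ acting on $\bbQ[p_1,p_2,\ldots]$ to a map whose image genuinely lies in $\Lambda_{N,\bbQ}$, i.e. pinning down the tacit ``set $p_n = 0$'' (vacuum) evaluation forced by the codomain. Once that bookkeeping is fixed, the remaining content — that $\exp$ of the derivation $A(x)$ implements the shift $p_n \mapsto p_n + x^n$ and that the $A(x_i)$ pairwise commute — is routine.
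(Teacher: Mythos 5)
Your proof is correct. The paper in fact omits the argument entirely (``The proof is immediate so we omit it''), so there is no written proof to compare against; what you give is the natural argument and surely the intended one: $\sum_{i=1}^{N} A(x_i) = \sum_{n>0} p_n(x_1,\ldots,x_N)\,\partial/\partial p_n$ is a locally nilpotent derivation of $\bbQ[p_1,p_2,\ldots]$, hence its exponential is the algebra homomorphism implementing the shift $p_n \longmapsto p_n + \sum_{i=1}^{N} x_i^n$, and comparison with $\pr_N$ on the generators $p_n$ finishes the proof. The issue you flag at the end is a genuine one, and you resolve it correctly: taken literally, $\exp\bigl(\sum_{i} A(x_i)\bigr)$ sends $p_n$ to $p_n + \sum_{i} x_i^n$ rather than to $\sum_{i} x_i^n$, so the stated identity $\pr_N = \exp\bigl(\sum_{i=1}^{N} A(x_i)\bigr)$ holds only after composing with the evaluation $p_n \mapsto 0$ (the vacuum pairing), which the paper leaves implicit. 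This convention is harmless for the way the lemma is used: in \eqref{eq:var-lim} the $N$-variable operators $O_{(N)}$ act only on the $x$-variables and hence commute with setting $p_n = 0$, so the intertwining relation $O_{(N)}\exp\bigl(\sum_i A(x_i)\bigr) = \exp\bigl(\sum_i A(x_i)\bigr)\,O$ still yields the desired compatibility $O_{(N)}\circ\pr_N = \pr_N\circ O$.
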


The proof is immediate so we omit it.

As a corollary, 
if $O_{(N)}$'s are compatible with $\pr_{M,N}$'s and satisfies 
\begin{align}\label{eq:var-lim}
 O_{(N)} \exp\left(\sum_{i=1}^{N} A(x_i) \right) = 
  \exp\left(\sum_{i=1}^{N} A(x_i) \right) O
\end{align}
then we have $O = \varprojlim O_{(N)}$.

\begin{prop}
We denote by
$$
 D_i := x_i \dfrac{\partial}{\partial x_i},\quad
 T_{q,i} := (x_i \longmapsto q x_i)
$$
the Euler operator and the $q$-shift operator for the variable $x_i$ 
respectively.
\begin{enumerate}
\item 
Consider the operator
\begin{align*}
 C^0_{(N)} := 2 (-1)^{N-1}
  \sum_{i=1}^N 
  \prod_{j \neq i}\left( -\dfrac{x_i+x_j}{x_i-x_j}\right)T_{-1,i}.
\end{align*}
Then we have 
$$
 C^0_0 = \varprojlim C^0_{(N)}.
$$

\item
The difference-differential operator
\begin{align*}
 C^1_{(N)} := \dfrac{1}{2} (-1)^{N-1}
  \sum_{i=1}^N \prod_{j \neq i}
   \left( -\dfrac{x_i+x_j}{x_i-x_j}\right)
   \left(D_{i}+\gamma \sum_{k \neq i}\dfrac{x_i}{x_i+x_k}\right)
   T_{-1,i}
\end{align*}
satisfies
$$
 C^1_0(\gamma) = \varprojlim C^1_{(N)}(\gamma).
$$
\end{enumerate}
\end{prop}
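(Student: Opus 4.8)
The plan is to reduce both claims to the intertwining criterion \eqref{eq:var-lim} and then to deduce them from the finite-variable realization of the Macdonald operator whose bosonic limit is the operator $\eta_0$ used in Lemma \ref{lem:C}. By \cite{AMOS} (see also \cite{SKAO}) the zeroth mode $\eta_0$ of the current $\eta(z)$ in \eqref{eq:eta} is the projective limit of the first Macdonald $q$-difference operator
$$
 D_{(N)} := \sum_{i=1}^N \prod_{j \neq i}\dfrac{t x_i - x_j}{x_i - x_j}\, T_{q,i},
$$
suitably renormalized by a scalar factor so that the limit exists. Since $C^0(z)$ and $C^1(\gamma;z)$ are \emph{defined} in Lemma \ref{lem:C} as the coefficients of $\hbar^0$ and $\hbar^1$ in the expansion of $\eta(z)$ at $q=-e^{\hbar}$, $t=-e^{\gamma\hbar}$, the idea is to expand the normalized $D_{(N)}$ in the same parameter $\hbar$ and match the zeroth modes order by order: the $\hbar^0$-term should yield $C^0_{(N)}$ and the $\hbar^1$-term $C^1_{(N)}(\gamma)$.

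Carrying this out, I would first record the two elementary expansions at $q=-e^{\hbar}$, $t=-e^{\gamma\hbar}$:
$$
 T_{q,i} = T_{-1,i}\bigl(1 + \hbar D_i + O(\hbar^2)\bigr),
 \qquad
 \dfrac{t x_i - x_j}{x_i - x_j}
 = -\dfrac{x_i + x_j}{x_i - x_j}\Bigl(1 + \hbar\,\gamma\,\dfrac{x_i}{x_i + x_j} + O(\hbar^2)\Bigr),
$$
where $D_i = x_i \partial/\partial x_i$ commutes with $T_{-1,i}$. Substituting into $D_{(N)}$ and collecting powers of $\hbar$ reproduces exactly the operator skeletons $\sum_i \prod_{j \neq i}(-\frac{x_i+x_j}{x_i-x_j})T_{-1,i}$ and $\sum_i \prod_{j \neq i}(-\frac{x_i+x_j}{x_i-x_j})(D_i + \gamma\sum_{k \neq i}\frac{x_i}{x_i+x_k})T_{-1,i}$ appearing in $C^0_{(N)}$ and $C^1_{(N)}(\gamma)$. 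The overall prefactors $2(-1)^{N-1}$ and $\tfrac12(-1)^{N-1}$ are then to be produced by the $\hbar$-expansion of the scalar renormalization of $D_{(N)}$ (the factor $(-1)^{N-1}$ coming from $t^{N-1}$ at $t=-1$). Finally, since by \cite{AMOS} each normalized $D_{(N)}$ satisfies \eqref{eq:var-lim} with $\eta_0$, the same identity holds coefficient-wise in $\hbar$, which is precisely the assertion $C^0_0 = \varprojlim C^0_{(N)}$ and $C^1_0(\gamma) = \varprojlim C^1_{(N)}(\gamma)$.

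The step I expect to be the main obstacle is pinning down this scalar renormalization and the accompanying constant terms. The eigenvalue of $D_{(N)}$ on a Macdonald polynomial is $\sum_{i=1}^N q^{\lambda_i} t^{N-i}$, which carries a genuinely $N$-dependent ($t^N$-type) piece; consequently the bare $D_{(N)}$ is \emph{not} compatible with the restriction maps $\pr_{M,N}$, and one must divide by a factor such as $(t-1)t^{-N}$ and subtract an $N$-dependent additive constant before the projective limit equals $\eta_0$ with eigenvalue $\ve_\lambda(q,t)$. Expanding these normalizing factors in $\hbar$ is what has to deliver the prefactors $2(-1)^{N-1}$ and $\tfrac12(-1)^{N-1}$ together with any residual additive scalars needed to make the $\hbar^0$- and $\hbar^1$-components genuinely compatible with $\pr_{M,N}$, equivalently to produce the eigenvalues $\ve^0_\lambda$ and $\ve^1_\lambda(\gamma)$ of Lemma \ref{lem:C}. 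Verifying this bookkeeping — in particular tracking the parity-dependent constant that a direct application of \eqref{eq:var-lim} to $C^0_{(N)}$ on the empty partition already exposes — is the delicate part; the remainder is the routine vertex-operator manipulation of commuting $T_{-1,i}$ and $D_i$ through the exponential $\exp(\sum_{i=1}^N A(x_i))$ that represents $\pr_N$.
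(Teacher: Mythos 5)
Your route is genuinely different from the paper's: the paper proves part (1) by verifying the intertwining relation \eqref{eq:var-lim} head-on, commuting $T_{-1,i}$ through $\exp\bigl(\sum_i A(x_i)\bigr)$ and identifying the resulting sums with projections of Hall--Littlewood one-row functions $g_r(0,-1)$ and their duals $g^*_r(0,-1)$, via the mode expansion $C^0_0=\sum_{r\ge0}(-1)^r g_r(0,-1)g^*_r(0,-1)$ (part (2) is omitted there). Your plan of $\hbar$-expanding a renormalized Macdonald operator is legitimate in principle, but the step you defer is exactly the decisive one, and your prediction of its outcome is wrong. Write $B^0_{(N)}$ and $B^1_{(N)}(\gamma)$ for the two sums appearing in $C^0_{(N)}$ and $C^1_{(N)}(\gamma)$ stripped of their scalar prefactors, so that $C^0_{(N)}=2(-1)^{N-1}B^0_{(N)}$ and $C^1_{(N)}(\gamma)=\frac{1}{2}(-1)^{N-1}B^1_{(N)}(\gamma)$. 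Any scalar renormalization has the form $a(\hbar)D_{(N)}+b(\hbar)$, and since $D_{(N)}=B^0_{(N)}+\hbar B^1_{(N)}(\gamma)+O(\hbar^2)$, the coefficient of $B^1_{(N)}$ in the $\hbar^1$-part necessarily equals the coefficient of $B^0_{(N)}$ in the $\hbar^0$-part (both are $a(0)$). The proposition demands these be $\frac{1}{2}(-1)^{N-1}$ and $2(-1)^{N-1}$ simultaneously, which is impossible: no bookkeeping of scalars can make your scheme output the stated pair. Concretely, eigenvalue matching (and uniqueness, since $\pr_N$ is surjective) forces $E_{(N)}=t^{-N}\bigl((t-1)D_{(N)}+1\bigr)$, and at $q=-e^{\hbar}$, $t=-e^{\gamma\hbar}$ one finds
\begin{align*}
E_{(N)}=\Bigl[2(-1)^{N-1}B^0_{(N)}+(-1)^N\Bigr]
+\hbar\Bigl[2(-1)^{N-1}B^1_{(N)}(\gamma)+(1-2N)\gamma(-1)^{N-1}B^0_{(N)}+(-1)^{N-1}N\gamma\Bigr]+O(\hbar^2),
\end{align*}
so what your method actually proves is $C^0_0=\varprojlim\bigl(C^0_{(N)}+(-1)^N\bigr)$ and $C^1_0(\gamma)=\varprojlim\bigl(4C^1_{(N)}(\gamma)+\frac{\gamma(1-2N)}{2}C^0_{(N)}+(-1)^{N-1}N\gamma\bigr)$, not the statements as printed.

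To be fair, this is not a defect of your method but of the statement itself: the parity constant you noticed on the empty partition is a genuine failure of (1), since $C^0_{(N)}1=2\sum_i\prod_{j\ne i}\frac{x_i+x_j}{x_i-x_j}=1-(-1)^N$, whereas $\pr_N(C^0_0 1)=1$; and (2) already fails at $N=1$, where $C^1_{(1)}(\gamma)x_1=-\frac{1}{2}x_1$ while $\pr_1(C^1_0(\gamma)p_1)=(2\gamma-2)x_1$ by direct computation from \eqref{eq:C10} (consistently with $\ve^1_{(1)}(\gamma)$ in Lemma \ref{lem:C}). The paper's own argument for (1) slips at exactly the same point: the Hall--Littlewood identity $\pr_N(g_r(0,-1))=2\sum_i x_i^r\prod_{j\ne i}\frac{x_i+x_j}{x_i-x_j}$ is valid only for $r\ge1$, and the $r=0$ term of the sum equals $1-(-1)^N$ rather than $\pr_N(g_0)=1$, which is precisely where the constant $(-1)^N$ is lost. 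So your instinct about the parity-dependent constant was correct, but the proposal stops short: had you carried the $\hbar$-expansion through, you would have found that the prefactor $\frac{1}{2}(-1)^{N-1}$ never arises and that both parts of the proposition hold only after inserting the correction terms displayed above.
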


\begin{proof}
The proof is by checking \eqref{eq:var-lim} directly and 
we only write down the proof for (1).
The proof of (2) is similar and we omit it.

Using 
\begin{align*}
T_{-1,i}\exp\left(\sum_{i=1}^N A(x_i)\right)
= \exp\left(\sum_{i=1}^N A(x_i)\right) \exp\left(A(-x_i)-A(x_i)\right),
\end{align*}
the right hand side of \eqref{eq:var-lim} for $O_{(N)}=C^0_{(N)}$ 
can be calculated  as
\begin{align}\label{eq:C0-lim:1}
C^0_{(N)}\exp\left(\sum_{i=1}^N A(x_i)\right)
=2(-1)^{N-1}
 \exp\left(\sum_{i=1}^N A(x_i)\right)
 \sum_{i=1}^N \prod_{j \neq i} \left(-\dfrac{x_i+x_j}{x_i-x_j}\right)
 \exp\left(-2\sum_{n>0}x_i^{2n-1} \dfrac{\partial}{\partial p_{2n-1}}\right).
\end{align}

Now recall the symmetric function $g_n(q,t)= Q_{(n)}(q,t) \propto P_{(n)}(q,t)$ 
given in \cite[Chap.\ VI]{M}.
It can be defined as the expansion
\begin{align*}
\sum_{n\ge0}g_n(q,t) z^n = 
 \exp\left(\sum_{n>0}\dfrac{1-t^n}{1-q^n}\dfrac{p_n}{n}z^n\right)
\end{align*}
Using $\pr_N(p_r)=\sum_{i=1}^N x_i^r$, one can show
$$
\pr_N\left(g_r(0,t)\right) = 
\begin{cases}
 (1-t)\sum_{i=1}^N x_i^r 
 \prod_{j \neq i} \left(-\dfrac{x_i+x_j}{x_i-x_j}\right)
 & r \le 1 \\
 1 & r=0
\end{cases}
$$
In particular, if $t=-1$, then we have 
\begin{align*}
 \sum_{n\ge0}g_n(0,-1) z^n = 
 \exp\left(2\sum_{n>0}\dfrac{p_{2n-1}}{2n-1}z^{2n-1}\right),\quad
 \pr_N\left(g_r(0,-1)\right) = 
 2\sum_{i=1}^N x_i^r 
 \prod_{j \neq i} \left(-\dfrac{x_i+x_j}{x_i-x_j}\right)
\end{align*}
for $r>0$.

Let us define $g^*_n(q,t)$ by the generating series
\begin{align*}
\sum_{n\ge0}g^*_n(q,t) z^n = 
 \exp\left(\sum_{n>0}\dfrac{1-t^n}{1-q^n}\dfrac{\partial}{\partial p_n}z^n\right).
\end{align*}
Then the computation of \eqref{eq:C0-lim:1} can be continued as
\begin{align*}
&2(-1)^{N-1}
 \sum_{i=1}^N \prod_{j \neq i} \left(-\dfrac{x_i+x_j}{x_i-x_j}\right)
 \exp\left(-2\sum_{n>0}x_i^{2n-1} \dfrac{\partial}{\partial p_{2n-1}}\right) \\
&=2(-1)^{N-1}
  \sum_{i=1}^N \prod_{j \neq i} \left(-\dfrac{x_i+x_j}{x_i-x_j}\right)
  \sum_{r\ge0} (-x_i)^r g^*_r(0,-1) \\
&=\sum_{r\ge0} (-1)^r  g^*_r(0,-1) \pr_N\left(g_r(0,-1)\right) 
 = \pr_N C^0_0.
\end{align*}
Thus (1) is proved.
\end{proof}


\end{document}